\newtheorem{theorem}{Theorem}[section]
\newtheorem{corollary}{Corollary}
\newtheorem{lemma}[theorem]{Lemma}
\newtheorem{proposition}{Proposition}
\theoremstyle{definition}
\newtheorem{remark}{Remark}
\newcommand{\ep}{\varepsilon}
\newcommand{\eps}[1]{{#1}_{\varepsilon}}
\newcommand{\R}{{\Bbb R}}
\newcommand{\C}{{\Bbb C}}
\newcommand{\Z}{{\Bbb Z}}
\title[Pushed traveling fronts in monostable  delayed equations ]
      {Pushed traveling fronts in monostable  equations with 
monotone delayed reaction}
\author[Elena Trofimchuk, Manuel Pinto  and Sergei Trofimchuk]{}
\subjclass{Primary: 34K12, 35K57; Secondary: 92D25.}
 \keywords{Upper and lower
solutions,  monotone traveling waves, pushed fronts,  
asymptotic integration,  minimal speed.}
 \email{trofimch@imath.kiev.ua}
 \email{pintoj@uchile.cl}
 \email{trofimch@inst-mat.utalca.cl}
\begin{document}
\maketitle

\centerline{\scshape Elena Trofimchuk }
\medskip
{\footnotesize
 \centerline{Department of Differential Equations, National Technical University}
   \centerline{Kyiv, Ukraine}
} 

\medskip

\centerline{\scshape Manuel Pinto}
\medskip
{\footnotesize
 \centerline{ Departamento de Matem\'aticas, Facultad de Ciencias,
Universidad de Chile}
   \centerline{Casilla 653, Santiago, Chile}
}

\medskip

\centerline{\scshape Sergei Trofimchuk}
\medskip
{\footnotesize
 \centerline{ Instituto de M\'atematica y F\'isica, Universidad de Talca}
   \centerline{Casilla 747, Talca, Chile}
}

\bigskip

\begin{abstract} We study the existence and uniqueness of wavefronts 
to the scalar reaction-diffusion equations $u_{t}(t,x) = \Delta u(t,x) - u(t,x) + g(u(t-h,x)),$ with monotone delayed reaction term $g: \R_+ \to \R_+$ and $h >0$.   We are mostly interested 
in the situation when the graph of  $g$ is not dominated by  its tangent line at zero, i.e. when the condition   $g(x) \leq g'(0)x,$ $x \geq 0$, is not satisfied. It is well known that, in such a case,  a special type of rapidly decreasing wavefronts (pushed fronts) can appear in  non-delayed equations (i.e. with $h=0$). One of our main goals here  is to  establish a similar result for $h>0$. We prove  the existence of the minimal speed of propagation, the uniqueness of wavefronts (up to a translation) and describe their asymptotics at $-\infty$.  We also present a new uniqueness result for a class of nonlocal lattice equations.
 
\end{abstract}

\section{Introduction}
\noindent In this work, we focus our efforts on the study of the existence, uniqueness and asymptotics  of positive monotone bounded  traveling  wave solutions $u(t,x) = \phi(\nu\cdot x +ct), \ \phi(-\infty) =0,$ to the scalar reaction-diffusion equation
\begin{equation} \label{1}
u_{t}(t,x) = \Delta u(t,x) - u(t,x) + g(u(t-h,x)), \ x \in \R^m. 
\end{equation}
It is assumed  that $\nu \in \R^m, |\nu| =1$, that the wave velocity $c$ is positive and the continuous monotone   nonlinearity $g: \mathbb R_+ \to \mathbb R_+$ satisfies the following assumption  \vspace{2mm}

\noindent  {\rm \bf(H)}  $g$ is strictly increasing and the equation $g(x)= x$ has exactly two nonnegative solutions:
$0$ and $\kappa >0$. Moreover, $g$ is differentiable at the equilibria  with $g'(0) >1,$ $g'(\kappa) < 1,$ and $g$ is $C^1$-smooth in some neighborhood of $\kappa$.  In addition, there exist $C >0,\ \theta \in (0,1],\ \delta >0$ such that   
\begin{equation}\label{gco}
\left|g(u)/u- g'(0)\right| \leq Cu^\theta, \quad u\in  (0,\delta].
\end{equation}
Perhaps, model (\ref{1}) is one of the simplest and most studied monostable delayed reaction-diffusion equations. See \cite{AGT,bn,FZ,FT,ma,ma1,MeiI,TTT,TT,TAT,wlr2,wz} and references therein for more detail regarding 
(\ref{1}) and its non-local versions.  In fact,  the last decade of studies has lead to almost complete description of  the existence, uniqueness and stability properties of wavefronts to (\ref{1}) whenever $g$ satisfies  (\textbf{H}) and  the following quite important sub-tangency  condition
\begin{equation}\label{st}
g(x) \leq g'(0)x, \quad x \geq 0. 
\end{equation}
The latter inequality was already used in the celebrated work \cite{kpp} by   A. Kolmogorov, I. Petrovskii and N. Piskunov, 
where it was assumed that $g'(x) < g'(0)$ for all $x \in (0,\kappa]$. Roughly speaking, inequality (\ref{st})
 amounts to the dominance of the `linear component' within essentially non-linear model (\ref{1}).  It is needless to say that, from the technical point of view, (\ref{st}) allows to simplify enormously the analysis of  traveling waves.  In Subsections 1.1-1.3 below we will illustrate this point in greater detail by discussing  such key issues as the minimal (critical) speed of propagation, the stability, existence and uniqueness of waves, the asymptotic properties of  wave profiles.   Therefore it is not a big surprise that none of these issues has been adequately addressed in that strongly nonlinear case when (\ref{st}) does not hold and  $h>0$\footnote{If $h=0$,  the wavefront problem for (\ref{1}) is essentially bi-dimensional in many aspects and it is rather well understood, cf. \cite{BD,ES,GK,HR,ST,Xin}. Next, since $-u+ g(v)$ is negative  for some $u,v \geq 0$, Schaaf's  results \cite{sch} can not be applied to (\ref{1}). In any event, the question  of pushed waves was not considered in  \cite{sch}.}. So our main objective in this paper is  to complete the study of the existence, uniqueness and asymptotics  of wavefronts to delayed reaction-diffusion equation (\ref{1}) considered  under hypothesis {\rm \bf(H)} and without  condition (\ref{st}). 

At this stage of discussion, it is instructive to raise the same questions but for a different family 
of delayed evolution equations
\begin{equation} \label{1d}
u_{t}(t,x) = [u(t,x+1) + u(t,x-1)-2u(t,x)] - u(t,x) + g(u(t-h,x)), \ x \in \R. 
\end{equation}
It is  obtained from (\ref{1}), $m=1$, by a formal discretization of  the Laplace operator.  Equivalently, we can consider the  lattice differential equations 
\begin{equation} \label{1dl}
u_{n}'(t) = [u_{n+1}(t) + u_{n-1}(t)-2u_n(t)] - u_n(t) + g(u_n(t-h)), \ n \in \Z.
\end{equation}
Equations (\ref{1d}), (\ref{1dl}) are special cases of more general nonlocal lattice population model proposed in \cite{wen}. These equations were analyzed by Ma and Zou in \cite{mazou}.    Once again, in order to prove the existence, uniqueness, monotonicity and stability of wavefronts,   (\ref{st}) together with {\rm \bf(H)} were assumed  in the cited work. 
One of the notorious features   of  \cite{mazou} consists in  its novel (and non-trivial) proof of the wave uniqueness. This proof  does not impose any restriction on   $\sup \{g'(x), x \in [0,\kappa]\}$ what is remarkable in the case of  delayed equations, cf. Subsection 1.2.  

On the other hand, starting from the pioneering work of Zinner, Harris and Hudson \cite{zhh}, significant progress has been achieved in the understanding of waves solutions in non-delayed versions of (\ref{1d}), (\ref{1dl}). See \cite{chen,chfg,mazou,zhh} for more information  and further references.  Non-delayed equation (\ref{1d}) can be also viewed as a particular case of the following differential equation with convolution 
\begin{equation} \label{1dcon}
u_{t}(t,x) = (J* u)(t,x) - u(t,x) + g(u(t,x)), \ x \in \R,
\end{equation}
which was firstly  introduced by Kolmogorov {\it et al} in \cite{kpp}. The latter equation was thoroughly  investigated during the past three  decades using various techniques, see \cite{AGT,CC,co,cdm,KS} and references therein.  Remarkably, sub-tangency condition (\ref{st}) was avoided  in the recent  important contributions \cite{chen, chfg} by  Chen {\it et al.} and \cite{co,cdm} by Coville {\it et al.}  Our present work was nourished in part by several ideas and approaches developed in the mentioned four papers.  For example, our proof of the existence of the minimal speed $c_*$ is also based on the lower-upper solution method. Once again,   the main  difficulty consists in finding  a 'good' upper solution (which additionally has to dominate lower solution), cf. consonant ideas expressed in  \cite[pp. 125-126]{chen} and \cite{TPT}.  As in \cite{chen}, we construct a new formal upper solution (for some velocity $c'$  close to a given velocity $c$) from a given wavefront $\phi(t,c)$. However, in difference with \cite{chen}, our upper solution  is not only formal but also
{\it true upper solution} appearing in pair with an appropriate  lower solution. We neither apply the truncation procedure as in \cite{chen, cdm, zhh} nor we  use  our upper solution as a bound  obligating solutions of associated  truncated problems to converge 
to a true wave solution (this nice idea was proposed in \cite{chen}).  We consider $\phi(t,c)$ only as a skeleton (we call it 'a base function') for creating  a true upper solution  by its suitable  modification.  Recently,  the method of base functions was successfully applied in  our previous work \cite{TPT}  to a model of  the  Belousov-Zhabotinskii reaction. 

Now,  two noteworthy differences appear while comparing (\ref{1dcon}) and (\ref{1}).  First of them is  technical: the presence of the second derivatives in  (\ref{1}) complicates the construction of the lower and upper solutions for (\ref{1})  (these solutions must be $C^1$-smooth or satisfy additional conjugacy relations at the discontinuity points of the derivative , cf.  \cite{BN,CMP,TPT,wz}).  The other difficulty  is more essential: the presence of positive delay $h$ can lead to the non-monotonicity of traveling fronts  \cite{BNPR,FT,NPT,TTT,TT} while such monotonicity seems to be crucial for the applicability of various approaches, e.g. of the sliding solution method \cite{BN,chfg,co,cdm}. 
Precisely in order to avoid front oscillations around $\kappa$, we  will consider strictly increasing $g$ in {\rm \bf(H)}.  It should be mentioned that  monotonicity of $g$ is not obligatory when $h=0$:  this is because function $g(u(t-h))+ku(t)$ is monotone in $u(t)$ for $k \gg 1, \ h=0$, cf. \cite{AGT}.   

Before going back to more detailed analysis of  the main problems addressed in this paper, we would like to state some useful results concerning the wavefronts to equation (\ref{1}) considered under assumption {\rm \bf(H)}.
Set $g'_+:= \sup_{x \geq 0} g(x)/x \geq g'(0) >1$ and define  $c_\#$ [respectively, $c^*$] as this unique positive number $c$ for which  
the characteristic equation 
\begin{equation}\label {che}
\chi(z,c):= z^2 -cz - 1 + pe^{-zch}=0
\end{equation}
with $p = g'(0)$ [respectively, with $p= g'_+$]
has a double positive root. It is easy to see that $c_\# \leq c^*$.  Note that $c_\#= c^*$ coincides with the minimal speed of propagation $c_*$  whenever (\ref{st}) is satisfied. 
If $c  > c_\#$ then the characteristic equation (\ref{che}) with $p = g'(0)$
has exactly two real solutions $0 < \lambda_2 < \lambda_1, \ \lambda_j = \lambda_j(c).$

\begin{proposition} \label{pr1} Assume  {\rm \bf(H)} and take some $c \geq c^*$. Then  (\ref{1})  has at least one monotone positive traveling front  $u(t,x)= \phi(\nu\cdot x+ct,c)$ propagating at the velocity $c$. Next, for $c < c_\#$ equation (\ref{1}) does not possess
any positive bounded wave solution  $u(t,x)= \psi(\nu\cdot x+ct), \ \psi(-\infty) =0$. 
 Moreover, each such wave solution to (\ref{1})  (if exists) is in fact a monotone front with profile $\psi$ satisfying $\psi'(s) >0$ for all $s \in \R$. Finally, if  $c \not=  c_\#$,
 then   the following asymptotic representation is valid (for an appropriate $s_0, \ j \in \{1,2\}$ and some $\sigma >0$):  
\begin{equation}\label {afe}
(\phi, \phi')(t+s_0,c)= e^{\lambda_j t}(1, \lambda_j) + O(e^{(\lambda_j+ \sigma) t}), \ t \to -\infty. 
\end{equation}
If $c=c_\#$ then besides (\ref{afe}) it may happen that 
\begin{equation}\label {afec}
(\phi, \phi')(t+s_0,c)=-te^{\lambda_j t}(1, \lambda_j) + O(e^{\lambda_j t}), \ t \to -\infty. 
\end{equation}
\end{proposition}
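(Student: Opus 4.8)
The plan is to decompose this proposition into four essentially independent parts and attack each with a standard tool from the traveling-wave toolbox. First, the \emph{existence} for $c \geq c^*$: I would build an explicit pair of ordered upper and lower solutions of the profile equation $\phi'' - c\phi' - \phi + g(\phi(\cdot - ch)) = 0$ and invoke the monotone iteration / lower-upper solution method in the profile space. The upper solution would be the standard $e^{\lambda t}$-type barrier built from the root $\lambda = \lambda(c,g'_+)$ of $\chi(z,c) = 0$ with $p = g'_+$ (truncated by $\kappa$); since $c \geq c^*$ such a real root exists, and because $g(u) \leq g'_+ u$ always, this function is a genuine upper solution. The lower solution is the usual $\max\{0, e^{\lambda t} - q e^{(\lambda + \eta)t}\}$ with $q$ large and $\eta$ small; monotonicity of $g$ plus (\textbf{H}) make the verification routine. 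One must check the ordering and extract a monotone profile in the limit, using the boundary conditions $\phi(-\infty) = 0$, $\phi(+\infty) = \kappa$ — here (\textbf{H}), in particular $g'(\kappa) < 1$, rules out overshoot past $\kappa$.

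Second, the \emph{non-existence} for $c < c_\#$: suppose a bounded positive wave $\psi$ with $\psi(-\infty) = 0$ exists. A Laplace-transform (one-sided) argument applied to the linear-at-zero part shows $\psi$ must decay exponentially at $-\infty$ with rate equal to a real root of $\chi(z,c) = 0$, $p = g'(0)$; but for $c < c_\#$ there are no positive real roots, and the standard Ikehara-type / contradiction argument (the abscissa of convergence of the Laplace transform would have to be a singularity on the real axis, which does not exist) forces a contradiction. This is the Carr–Chmaj / Diekmann type argument.

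Third, the \emph{forced monotonicity and positivity of the derivative}: given any bounded positive wave with $\psi(-\infty) = 0$, one differentiates the profile equation, so $\psi'$ solves a linear equation with positive delayed coefficient $g'(\psi(\cdot - ch)) > 0$; a maximum-principle / Diekmann–Kaper type argument for the operator $\psi \mapsto \psi'' - c\psi' - \psi$ together with the sign of the delayed term shows $\psi' > 0$ everywhere (if $\psi'$ vanished or changed sign one gets a contradiction with the positivity of the kernel of the resolvent and $\psi(-\infty)=0$). Since $g$ is strictly increasing this is where the hypothesis that $g$ is monotone is genuinely used to preclude oscillation near $\kappa$.

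Fourth, the \emph{asymptotics} (\ref{afe})–(\ref{afec}): this is the asymptotic-integration step. Write the profile equation as a linear autonomous equation $\phi'' - c\phi' - \phi + g'(0)\phi(\cdot - ch) = -[\,g(\phi(\cdot-ch)) - g'(0)\phi(\cdot-ch)\,]$, whose right side is $O(\phi^{1+\theta})$ by (\ref{gco}), hence integrable-small relative to the homogeneous solutions $e^{\lambda_j t}$. Standard asymptotic integration for functional-differential equations (à la Hartman, or the Mallet-Paret spectral projection) then gives that $\phi(t+s_0,c)$ is asymptotic to a solution of the linear equation; for $c \neq c_\#$ the relevant eigenvalue is simple ($\lambda_1$ or $\lambda_2$, depending on which the profile selects), yielding (\ref{afe}), while at $c = c_\#$ the double root produces the extra resonant term $-t e^{\lambda_j t}$, giving (\ref{afec}). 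The correction exponent $\sigma > 0$ comes from the spectral gap to the next eigenvalue.

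I expect the main obstacle to be the existence step at the endpoint $c = c^*$ together with controlling the ordering of the two solutions when (\ref{st}) fails: precisely because $g(u)$ may exceed $g'(0)u$, the naive upper solution built at rate $\lambda_1(c,g'(0))$ need not dominate the lower solution, which is why the construction must use $g'_+$ and the speed $c^*$ rather than $c_\#$. The delicate point is verifying that the $g'_+$-barrier is still an ordered upper solution lying above a lower solution with the \emph{correct} leading rate, and that the monotone-iteration limit is nontrivial (does not collapse to $0$) — this requires care with the lower solution's amplitude. The asymptotic step is technically heavy but conceptually standard; the non-existence and monotonicity steps are comparatively routine once the Laplace-transform and maximum-principle machinery is set up.
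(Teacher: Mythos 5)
First, be aware that the paper does not prove Proposition \ref{pr1} from first principles: its proof assembles the statement from the literature (existence for $c\ge c^*$ from \cite[Theorem 4]{TAT}, non-existence for $c<c_\#$ from \cite[Theorem 1]{TAT}, strict monotonicity from \cite[Corollary 12]{TTT}, exponential decay from Diekmann--Kaper theory, and the asymptotics from \cite{TT} and Mallet-Paret's Fredholm theory \cite{FA}). Your four-part decomposition and the tools you name are essentially the ones used in those references, so parts two and three of your plan are sound in outline. In part four, however, you are missing one ingredient the paper explicitly flags: before asymptotic integration can place $\phi$ on the modes $e^{\lambda_1 t}, e^{\lambda_2 t}$, one must rule out \emph{super-exponential} decay at $-\infty$ (and decay governed by complex roots of $\chi$). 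For mixed-type functional differential equations this is not automatic; it uses the positivity of the profile, and the paper cites \cite[Theorem 5.4 and Remark 5.5]{TT} precisely for this. Without it the spectral-projection argument does not start.

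The genuine gap is in the existence step. Your upper solution $\min\{\kappa,e^{\mu t}\}$, with $\mu$ a real root of $\chi(z,c)=0$ for $p=g'_+$, is fine; the problem is that it cannot be ordered with the lower solution $\max\{0,e^{\lambda t}-qe^{(\lambda+\eta)t}\}$ as written. Near $-\infty$ one only has $g(u)\ge g'(0)u-Cu^{1+\theta}$, so the differential inequality for a lower solution forces its leading rate $\lambda$ to satisfy $\chi(\lambda,c)\ge 0$ with $p=g'(0)$; the standard choice is $\lambda=\lambda_2(c,g'(0))$. But when $g'_+>g'(0)$ every real root $\mu$ of the $g'_+$-equation lies strictly inside $(\lambda_2(c,g'(0)),\lambda_1(c,g'(0)))$, so $e^{\lambda_2(c,g'(0))t}/e^{\mu t}\to\infty$ as $t\to-\infty$ and no translate of that lower solution stays below the upper one. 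Conversely, taking $\lambda=\mu$ makes the leading coefficient $\chi(\mu,c;g'(0))=-(g'_+-g'(0))e^{-\mu ch}<0$ and the lower-solution inequality fails. So the ``delicate point'' you flag is not a verification to be carried out but the actual obstruction: one needs either a lower solution decaying at the \emph{faster} rate $\lambda_1(c,g'(0))>\mu$ with a non-standard correction term, or a different scheme altogether. This is exactly why the paper outsources existence to \cite{TAT}, and why in Section 2 the authors build upper solutions from an already existing wavefront (a ``base function'') rather than from pure exponentials.
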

\begin{proof} The existence of fronts for $c \geq c^*$ follows from \cite[Theorem 4]{TAT} while their non-existence for $c < c_\#$ is a well known fact (e.g. see \cite[Theorem 1]{TAT}).    Due to \cite[Corollary 12]{TTT}, the wave profiles $\psi$  are  monotone, with $\psi'(s) >0,$ $ s \in \R$.  The exponential convergence  $\psi(t) \to 0, t \to -\infty,$ is a consequence  of the Diekmann-Kaper theory, see \cite{DK} and \cite[Lemma 3]{AGT}.   Therefore there is $\delta >0$ such that 
$$
g(\psi(t-ch))= \left[g'(0) + r(t)\right]\psi(t-ch),\ \text{where} \ r(t):= \frac{g(\psi(t-ch))}{\psi(t-ch)} -g'(0) = o(e^{\delta t}).
$$
On the other hand, it is easy to see that the convergence  $\psi(t) \to 0, t \to -\infty,$  is not super-exponential, cf.  \cite[Theorem 5.4 and Remark 5.5]{TT}.  Now we can proceed as in  \cite[Remark 5.5]{TT} (where \cite[Proposition 7.2]{FA} should be used) to obtain  asymptotic formulas (\ref{afe}), (\ref{afec}).  
\end{proof}

\subsection{Minimal speed of propagation}
By Proposition \ref{pr1},   after assuming  (\ref{st}), the minimal speed of propagation $c_*$ can be computed  from the characteristic equation (\ref{che}) considered with $p = g'(0)$. Without (\ref{st}), the computation of $c_*$ represents a very difficult task even for non-delayed models \cite{BD,GK,Xin}.  In such a case, the value of $c_*$ depends not only on $g'(0)$ but also on the whole nonlinearity $g$.  
Furthermore,  if $h>0$ and (\ref{st}) does not hold, the situation becomes even more complicated: it is an open question whether  there exists a positive $c_*$ splitting  $\R_+$  on subsets of admissible and non-admissible (semi-) wave speeds.   In the present paper, we answer positively this question at least for $g$ satisfying    assumption (\textbf{H}) with (\ref{gco}) replaced with the slightly more restrictive inequality
\begin{equation}\label{gcos}
\left|g'(u)- g'(0)\right| \leq Cu^\theta, \quad u\in  [0,\delta].
\end{equation}
\begin{theorem} \label{main1}
Suppose that $g$ satisfies  (\ref{gcos})  and {\rm \bf(H)}. Then there exists a positive 
number $c_*$ such that equation (\ref{1}) (a)  for every $c \geq c_*$  possesses  at least one  monotone traveling front $u(t,x)= \phi(\nu\cdot x+ct,c)$; (b)  has not any traveling front propagating at the velocity 
$c < c_*$.  
\end{theorem}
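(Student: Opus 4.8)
\noindent
The plan is to set
\[
\mathcal C:=\{c>0:\ (\ref{1})\text{ admits a monotone positive front }u(t,x)=\phi(\nu\cdot x+ct,c)\},
\]
define $c_*:=\inf\mathcal C$, and then establish two structural facts about $\mathcal C$: that it is \emph{closed} in $(0,\infty)$ and that it is \emph{upward closed}. Granting these, the theorem follows at once. Indeed, by Proposition \ref{pr1} we have $[c^*,\infty)\subseteq\mathcal C$, so $\mathcal C\neq\emptyset$, while also $\mathcal C\subseteq[c_\#,\infty)$ since (\ref{1}) has no positive bounded wave at speeds below $c_\#$; hence $c_*\in[c_\#,c^*]$ is a well-defined positive number. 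Closedness gives a monotone front exactly at $c=c_*$; upward closedness together with $c_*=\inf\mathcal C$ gives monotone fronts for every $c>c_*$, which is part (a); and part (b) is immediate from $c_*=\inf\mathcal C$.

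For closedness I would take monotone fronts $\phi(\cdot,c_n)$ at speeds $c_n\to c_\infty>0$, translate each so that $\phi(0,c_n)=\kappa/2$, and pass to a limit. Since $0\le\phi(\cdot,c_n)\le\kappa$ and each profile solves $\phi''-c_n\phi'-\phi+g(\phi(\cdot-c_nh))=0$ with $c_n$ ranging over a compact set, the equation yields uniform bounds on $\phi'$ and $\phi''$ (the decay rates at $\pm\infty$ supplied by Proposition \ref{pr1} handle the tails), so by Arzel\`a--Ascoli, along a subsequence, $\phi(\cdot,c_n)\to\phi_\infty$ in $C^1_{\mathrm{loc}}(\R)$, and $\phi_\infty$ is a non-decreasing bounded solution of the limiting profile equation with $\phi_\infty(0)=\kappa/2$. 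A monotone bounded solution has limits at $\pm\infty$ which, inserted into the equation, must solve $u=g(u)$; the normalisation then forces $\phi_\infty(-\infty)=0$ and $\phi_\infty(+\infty)=\kappa$, so $c_\infty\in\mathcal C$.

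The substantial part is upward closedness, and for it I would use the base-function method sketched in the Introduction. Given $c_0\in\mathcal C$ with $c_0<c^*$ and the corresponding front $\phi_0=\phi(\cdot,c_0)$ — whose behaviour at $-\infty$ is pinned down by (\ref{afe})--(\ref{afec}) — I would, for each speed $c'$ slightly larger than $c_0$, build from $\phi_0$ a genuine ordered pair of sub- and supersolutions $\underline\phi\le\overline\phi$ of the profile equation at speed $c'$ (via a suitable time reparametrisation of $\phi_0$ and/or a small multiplicative perturbation, truncated at the level $\kappa$, with the correct behaviour at $\pm\infty$). The two inputs that make the differential inequalities work are the H\"older control (\ref{gcos}) on $g'$ near $0$ and the exact exponential rate of $\phi_0$ at $-\infty$; because of the second-order term in (\ref{1}), $\underline\phi$ and $\overline\phi$ must moreover be $C^1$ or satisfy the usual conjugacy inequalities at their finitely many corner points, which is the delicate bookkeeping. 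A monotone iteration between $\underline\phi$ and $\overline\phi$ then produces a monotone front at speed $c'$, so $[c_0,c_0+\delta)\subseteq\mathcal C$ for some $\delta=\delta(c_0)>0$. A routine connectedness argument on $[c_0,c^*]$ — let $d^*:=\sup\{d\in[c_0,c^*]:[c_0,d]\subseteq\mathcal C\}$, note $d^*\in\mathcal C$ by closedness, and observe that $d^*<c^*$ would contradict its maximality via the local step applied at $d^*$ — then upgrades this to $[c_0,c^*]\subseteq\mathcal C$, hence $[c_0,\infty)\subseteq\mathcal C$.

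I expect the construction of the \emph{true} ordered sub/supersolution pair for the perturbed speed — in particular getting the upper solution to genuinely dominate the lower one while respecting the corner-point conditions forced by the diffusion term — to be the main obstacle; the a priori bounds, the limit passage and the connectedness argument are standard once that construction is in hand.
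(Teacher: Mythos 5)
Your skeleton coincides with the paper's: the admissible set $\mathcal{C}(h)$ of front speeds, $c_*:=\inf\mathcal{C}(h)\in[c_\#,c^*]$, closedness of $\mathcal{C}(h)$ (the paper simply cites \cite{GT} for this; your Arzel\`a--Ascoli sketch is a reasonable substitute), a local perturbation step showing that each $c_0\in\mathcal{C}(h)$ has a right neighbourhood in $\mathcal{C}(h)$, and the identical connectedness argument at the end. Part (b) and the framing are fine. The problem is that the local step \emph{is} the proof, and you have left its two decisive constructions as declarations of intent; they are not recoverable by the ``bookkeeping'' you defer.

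Concretely: (i) the upper solution at speed $c'>c_0$ is not merely a truncated multiplicative perturbation of the given front $\phi_0$; it is $\phi_+=\min\{\kappa,\ \sigma\phi_0(t)+a e^{\lambda_2(c')t}+b e^{\lambda_2(c_0)t}\}$ with $\sigma>1$ close to $1$ and $a=b^2$, $b$ small. The corrector $a e^{\lambda_2(c')t}$ is indispensable: $\lambda_2(\cdot)$ is decreasing, so the lower solution decays at $-\infty$ like $e^{\lambda_2(c')t}$, strictly \emph{slower} than any multiple or translate of $\phi_0$, and $\sigma\phi_0$ alone can never dominate it --- this is exactly the domination issue you flag but do not resolve. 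The smallness condition $(1+\theta)\lambda_2(c')>\lambda_2(c_0)$, combined with the H\"older control (\ref{gco}), is what keeps the differential inequality negative after the exponentials are added. (ii) The lower solution is not built from $\phi_0$ at all: the paper replaces $g$ by $g_-=\min\{g(x),\ g'(0)x/(1+Ax)\}$, which satisfies the sub-tangency condition (\ref{st}), takes an \emph{exact} monotone front of the auxiliary equation connecting $0$ to $\kappa/2$ at speed $c'$ (available by the known sub-tangential theory), and uses $g_-\le g$ to convert it into a lower solution with the required rate $e^{\lambda_2(c')t}$ at $-\infty$. Finally, the single corner of $\phi_+$ carries $\Delta\phi_+'|_{t_0}<0$, which is precisely the sign needed for the jump term in the representation formula of Lemma \ref{imp} to preserve $\mathcal{A}\phi_+\le\phi_+$ in the monotone iteration. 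Without (i) and (ii) the ordered pair you postulate does not exist in the form you describe, so the proposal is a correct plan rather than a proof of part (a).
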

 If $g$ is not monotone, the existence of such $c_*$ remains an unsolved problem.  In any case, for non-monotone $g$,   it is necessary to introduce some adjustments to the definition of traveling front 
solution, replacing it with the concept of semi-wavefront solution, see \cite{TTT,TT}.

\subsection{Uniqueness of wavefronts} More subtle aspects of  uniqueness and stability of wavefronts in (\ref{1}) were studied so far  under the geometric conditions even more restrictive than (\ref{st}). For example, $g''(s) \leq 0$ was required in the main stability theorem of \cite{MeiI}.  Similarly, 
uniqueness (up to a shift)  of each non-critical (i.e.  $c\not= c_*$) monotone traveling front of equation  (\ref{1}) can be deduced from 
\cite[Corollary 4.9]{wlr2} whenever $g$ meets  the conditions: (A1) $g \in C^2[0,\kappa],\ g(x) >0, x \in (0, \kappa)$; (A2)  $g'(\kappa) < 1$ and (\ref{st}) holds; (A3) For every $\delta \in (0, 1)$, there exist $a = a (\delta) > 0,\  \alpha = \alpha (\delta) \geq 0$ 
and $\beta  = \beta(\delta) 	\geq 0 $ 
with $\alpha+\beta > 0$ such that for any $\theta \in (0, \delta]$ and $v \in  [0, \kappa]$,
$$(1 - \theta) g(v) - g ((1 - \theta) v) \leq - a\theta \kappa^\alpha v^\beta.$$
Let us show that  (A3) is stronger than (\ref{st}).  Indeed, after 
dividing the latter inequality by $\theta$ and taking limit as $\theta \to +0$, we find that 
$$-g(v) + g' (v)v  \leq - a \kappa^\alpha v^\beta <0, \quad v \in [0, \kappa].$$
Therefore 
$
g'(v) < g(v)/v, \ v \in (0, \kappa],$
that, after an easy integration, yields 
$$
0 \leq g'(v) < \frac{g(v)}{v} \leq \frac{g(u)}{u} \leq g'(0+) = \lim_{u \to +0}  \frac{g(u)}{u}, \quad v \geq u.
$$
It is clear that the above inequalities are stronger that the Lipshitz condition 
\begin{equation}\label{lc}
|g(u) -g(v)| \leq g'(0)|u-v|, \quad u,v \in [0, \kappa],
\end{equation}
which in turn is more restrictive than (\ref{st}). 
 
Inequality (\ref{lc}) is one of the basic conditions of the uniqueness theory developed by Diekmann and Kaper, cf.  \cite{DK} and \cite{AGT}.   Suppose, for instance, that $g \in C^{1,q}$ in some neighborhood of $0$. Then (\ref{lc}) implies the uniqueness 
of all non-critical \cite{DK} as well as  critical  \cite{AGT}  wavefronts to (\ref{1}). Additionally,  \cite{AGT} establishes
the uniqueness of all fronts propagating at the velocity $c > c_u$ where $c_u$  can be computed (similarly to 
$c_*$ in the sub-tangential case) from the equation 
$$
z^2 -cz - 1 + {\rm ess}\sup_{v \in [0, \kappa]} g'(v) e^{-zch}=0.
$$
An alternative approach to the uniqueness problem is based on  the sliding method developed by Berestycki and Nirenberg  \cite{BN}.  This technique was successfully applied in \cite{chen,co,cdm, mazou} to prove the uniqueness of {\it monotone} wavefronts without  imposing  any Lipshitz condition on $g$.\footnote{In difference, the Diekmann-Kaper theory can be applied to the  non-monotone waves and nonlinearities.} In the present paper, inspired by a recent Coville's work \cite{co}, we use the sliding method to prove the following assertion: 
\begin{theorem}\label{main2} Assume  that  {\rm \bf(H)} is satisfied.  Fix some $c \geq c_*$, and suppose that $u_1(t,x)= \phi(\nu\cdot x+ct)$, $u_2(t,x)= \psi(\nu\cdot x+ct)$ 
are two traveling fronts of equation  (\ref{1}). Then $\phi(s)= \psi(s+s_0), \ s \in \R,$ for some $s_0$. 
\end{theorem}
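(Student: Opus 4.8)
The plan is to establish Theorem~\ref{main2} by the sliding method, in the spirit of Berestycki--Nirenberg \cite{BN} and of Coville's recent work \cite{co}, applied to the profile equation
\[
\phi''(s)-c\phi'(s)-\phi(s)+g(\phi(s-ch))=0,\qquad \phi(-\infty)=0,\ \ \phi(+\infty)=\kappa .
\]
Since $\phi,\psi$ are genuine (hence $C^2$) solutions and $g$ is \emph{increasing}, any two of their shifts satisfy a linear \emph{cooperative} equation: writing $\psi_\tau(s):=\psi(s+\tau)$, if $w:=\psi_\tau-\phi\ge0$ then
\[
w''(s)-cw'(s)-w(s)+b(s)\,w(s-ch)=0 ,
\]
where $b(s)\ge0$ is the divided difference of $g$ on the interval with endpoints $\phi(s-ch)$ and $\psi_\tau(s-ch)$, so $b(-\infty)=g'(0)$ and $b(+\infty)=g'(\kappa)$. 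As $b\ge0$, $w$ is a supersolution of $\mathcal L:=\partial^2-c\partial-1$, which has non-positive zero-order coefficient; hence the strong maximum principle applies: a non-negative $w$ of this type vanishing at a single finite point vanishes identically. The same holds with the roles of $\phi$ and $\psi_\tau$ exchanged.

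Next I would record the asymptotic input. At $-\infty$, Proposition~\ref{pr1} gives, after a shift, $\phi(s)=A_\phi e^{\lambda_{j}s}(1+o(1))$ with $A_\phi>0$ (or the borderline form (\ref{afec}) when $c=c_\#$), and likewise for $\psi$ with exponent $\lambda_k$. At $+\infty$, linearising at $\kappa$ (where $g'(\kappa)<1$) gives $\kappa-\phi(s)=a_\phi e^{-\mu s}(1+o(1))$ with $a_\phi>0$ and $\mu>0$ the exponential rate of approach to $\kappa$; moreover, by asymptotic integration (as in the proof of Proposition~\ref{pr1}), a non-trivial non-negative solution of the linear equation above that tends to $0$ at $+\infty$ cannot do so faster than $e^{-\mu s}$, since the remaining modes at $\kappa$ are oscillatory. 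As a preliminary step I would show that $\phi$ and $\psi$ \emph{share the same decay exponent at} $-\infty$: if not, say $\psi$ decays strictly faster, then $\psi_\tau<\phi$ near $-\infty$ for \emph{every} $\tau$, so $T^-:=\{\tau:\psi_\tau\le\phi \text{ on }\R\}$ is a non-empty closed half-line $(-\infty,\tau_-]$ (here $\psi$ monotone is used). At $\tau_*:=\tau_-$ one has $\phi-\psi_{\tau_*}\sim A_\phi e^{\lambda_2 s}>0$ near $-\infty$, so the obstruction to raising $\tau$ sits at a finite point or at $+\infty$; in the first case the strong maximum principle, in the second the rigidity at $\kappa$, forces $\psi_{\tau_*}\equiv\phi$, making $\psi$ a translate of $\phi$, contrary to the assumed difference of decay rates.

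With a common exponent $\lambda:=\lambda_j$ in hand I would run a \emph{two-sided} sliding. The amplitude of $\psi_\tau$ at $-\infty$ is $A_\psi e^{\lambda\tau}$, strictly increasing in $\tau$; its amplitude at $+\infty$ is $a_\psi e^{-\mu\tau}$, strictly decreasing in $\tau$. Using these asymptotics together with pointwise convergence on compacta, both $T^-=(-\infty,\tau_-]$ and $T^+:=\{\tau:\psi_\tau\ge\phi\text{ on }\R\}=[\tau_+,\infty)$ are non-empty half-lines with $\tau_-\le\tau_+$ (otherwise any $\tau\in[\tau_+,\tau_-]$ already gives $\psi_\tau\equiv\phi$). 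At $\tau=\tau_+$: if $\psi_{\tau_+}$ touches $\phi$ at a finite point we get $\psi_{\tau_+}\equiv\phi$ by the strong maximum principle; if they are asymptotically equal at $+\infty$ the rigidity at $\kappa$ gives the same; the only remaining possibility is that the obstruction to lowering $\tau$ below $\tau_+$ sits at $-\infty$, which, comparing amplitudes, forces $A_\psi e^{\lambda\tau_+}=A_\phi$. The symmetric argument at $\tau=\tau_-$ yields $A_\psi e^{\lambda\tau_-}=A_\phi$. Since $\tau\mapsto A_\psi e^{\lambda\tau}$ is injective, $\tau_-=\tau_+=:s_0$, whence $\psi_{s_0}\ge\phi\ge\psi_{s_0}$, that is, $\phi(s)=\psi(s+s_0)$.

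The main obstacle is the behaviour at $-\infty$, where $0$ is an \emph{unstable} equilibrium ($g'(0)>1$): unlike the bistable case there is no exponential squeezing available there, and a priori $\psi_\tau-\phi$ could decay faster than the profiles themselves (along the $e^{\lambda_1 s}$ mode when the common decay is $e^{\lambda_2 s}$), which would stall a one-sided sliding — this is precisely what makes the problem delicate without the sub-tangency condition (\ref{st}). The device that circumvents it is the two-sided scheme combined with the strict monotonicity in $\tau$ of the amplitude $A_\psi e^{\lambda\tau}$, which pins the two sliding thresholds to the same value. The remaining ingredients are routine but must be verified carefully for the non-autonomous cooperative delay equation: the strong maximum principle (uses $g$ increasing), the rigidity of positive solutions at $\kappa$ (uses $g'(\kappa)<1$ and the absence of super-exponentially decaying non-trivial solutions), and the case $c=c_\#$, where (\ref{afec}) is allowed but does not affect the monotonicity in $\tau$ of the leading amplitude.
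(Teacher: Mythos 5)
Your overall strategy --- sliding combined with a maximum principle and the $-\infty$ asymptotics of Proposition \ref{pr1} --- is the same as the paper's, and several of your ingredients (comparison of leading amplitudes at $-\infty$, a touching-point argument via the operator $w''-cw'-w\le 0$, the preliminary step that both profiles must share the same decay exponent, cf.\ Corollary \ref{coco}) echo the paper's Lemma \ref{l6} and its corollary. The essential divergence, and the gap, is at $+\infty$. Your two-sided sliding stands or falls on two assertions you declare routine: that $\kappa-\phi(s)=a_\phi e^{-\mu s}(1+o(1))$ with $a_\phi>0$, and the ``rigidity at $\kappa$'' --- that a non-negative, non-trivial solution of $w''-cw'-w+b(s)w(s-ch)=0$ with $b(s)\to g'(\kappa)$ cannot tend to $0$ at $+\infty$ faster than $e^{-\mu s}$. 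Neither is established anywhere in the paper (Proposition \ref{pr1} gives asymptotics only at $-\infty$), and for a delayed profile equation neither is routine: you must locate the zeros of $\chi_\kappa(z)=z^2-cz-1+g'(\kappa)e^{-zch}$ in the left half-plane, prove $a_\phi\neq 0$, and --- the hard part --- exclude super-exponential decay of $w$ at $+\infty$ for a \emph{non-autonomous} functional differential equation whose coefficient $b(s)$ converges to $g'(\kappa)$ with no prescribed rate, since (\textbf{H}) assumes only $C^1$-smoothness of $g$ near $\kappa$ (no H\"older modulus there; note also that $g'(\kappa)=0$ is allowed, which degenerates the delayed term of your linearization). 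Every branch of your argument in which the touching point escapes to $+\infty$ --- including your preliminary ``same exponent at $-\infty$'' step --- leans on this unproved rigidity, so the gap is not confined to one case.

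The paper's proof is engineered precisely to avoid needing anything at $+\infty$ beyond the bare convergence $\phi(+\infty)=\psi(+\infty)=\kappa$ and the inequality $g'(s)\le 1$ on $[\kappa-\sigma,\kappa]$. The device is a \emph{vertical} shift: in Lemma \ref{l6} one sets $a_*=\inf\{a\ge 0:\ \psi+a\ge\phi \text{ on } \R\}$ and, when $a_*>0$, studies $w=\psi+a_*-\phi$, which satisfies $w(\pm\infty)=a_*>0$; its zero minimum is therefore attained at a \emph{finite} leftmost point $t_m$, and the contradiction $a_*+g(\psi(t_m-ch))-g(\phi(t_m-ch))>0$ follows from $g'\le 1$ near $\kappa$ with no spectral information about the equilibrium $\kappa$ whatsoever (a horizontal pre-shift reduces the general situation to the case where all relevant arguments lie near $\kappa$). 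If you wish to keep your two-sided horizontal scheme you must supply the $+\infty$ asymptotics and the rigidity lemma from scratch; the shorter repair is to replace the $+\infty$ end of your sliding by this vertical-shift argument.
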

We note that, when $h >0$, we were not able to drop the condition of strict monotonicity  on $g$ imposed in Theorem \ref{main2} (even while  considering only monotone wavefronts).   If $h=0$, the monotonicity of $g$ is not obligatory. 

The ideas behind the proof of Theorem \ref{main2} combined with  asymptotic description of wavefronts given in \cite{AGT} also allow to derive a new uniqueness result for the following nonlocal lattice system     
\begin{equation} \label{1dnl}
u_{n}'(t) = D [u_{n+1}(t) + u_{n-1}(t)-2u_n(t)] - u_n(t) + \sum_{k \in \Z}\beta (n-k)g(u_{k}(t-h)), \ n \in \Z,
\end{equation}
where $D >0, \beta(k)\geq 0, \ \sum_{k\in \Z}\beta(k)=1$.  Let  $\gamma^\#$ be an extended non-negative real number such that $B(z):= \sum_{k\in \Z}\beta(k)e^{-zk}$ is finite when $z\in[0,\gamma^\#)$ and is infinite when $z > \gamma^\#$.  By Cauchy-Hadamard formula, 
$\gamma^\# = - \limsup_{k \to +\infty} k^{-1}\ln\beta(-k)$, where we adopt the convention that $\ln(0)=-\infty$. Our requirement is  that such 
$\gamma^\#$ is positive   and that $B(\gamma^\#-)=+\infty$. 
\begin{theorem}\label{main4} Assume  {\rm \bf(H)} except for the strict character of the monotonicity of $g$.  Suppose that $w_j(t)= \phi(j+ct)$, $v_j(t)= \psi(j+ct)$ 
are traveling fronts to nonlocal lattice eqution  (\ref{1dnl}) and $c \not=0$. Then there is $s_0$ such that  $\phi(s)= \psi(s+s_0), \ s \in \R.$ 
\end{theorem}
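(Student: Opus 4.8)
The plan is to transplant the sliding argument behind Theorem \ref{main2} to the lattice system (\ref{1dnl}), using the asymptotic description of the fronts of \cite{AGT} as the main external input. Passing to the profile equation: with $s=j+ct$, any front $w_j(t)=\phi(j+ct)$ of (\ref{1dnl}) (and likewise $\psi$) is necessarily monotone in $s$ when $g$ is monotone (cf. the reasoning behind Proposition \ref{pr1}), with $\phi(-\infty)=\psi(-\infty)=0$, $\phi(+\infty)=\psi(+\infty)=\kappa$, and satisfies
\begin{equation}\label{pp1}
c\phi'(s) = D\bigl(\phi(s+1)+\phi(s-1)-2\phi(s)\bigr) - \phi(s) + \sum_{k\in\Z}\beta(k)\,g(\phi(s-ch-k)).
\end{equation}
I would carry out the argument for $c>0$; the case $c<0$ is handled symmetrically, running the estimates below from $+\infty$ rather than $-\infty$.

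Next I would control the tails. Linearising (\ref{pp1}) at the zero equilibrium yields the characteristic function
\begin{equation}\label{pp2}
\Xi(z,c):= D(e^z+e^{-z}-2) - cz - 1 + g'(0)\,B(z)\,e^{-zch},
\end{equation}
which has $\Xi(0,c)=g'(0)-1>0$ and, by the standing hypothesis on $\gamma^\#$, tends to $+\infty$ as $z\to\gamma^\#-$; hence it possesses a pair of positive zeros $0<\lambda_2\le\lambda_1<\gamma^\#$ governing the decay of any front at $-\infty$. By the Diekmann--Kaper / asymptotic-integration machinery invoked in Proposition \ref{pr1} (see \cite{AGT}), each front behaves at $-\infty$ like $P(s)e^{\lambda s}(1+o(1))$ with $\lambda\in\{\lambda_1,\lambda_2\}$ and $P$ constant or affine, and approaches $\kappa$ at $+\infty$ like $\kappa-Q(s)e^{-\mu s}(1+o(1))$ with $\mu>0$ the (simple, since $g'(\kappa)<1$) positive root of the characteristic equation at $\kappa$. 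After possibly interchanging $\phi$ and $\psi$, I may assume $\phi$ decays at $-\infty$ no faster than $\psi$.

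Now the sliding. For $\tau\in\R$ put $\phi^\tau(s):=\phi(s+\tau)$. Since $\phi$ is nondecreasing, $\tau\mapsto\phi^\tau$ is nondecreasing, so $\mathcal T:=\{\tau:\phi^\tau\ge\psi\text{ on }\R\}$ is an up-set, closed by continuity. Combining $\phi^\tau\to\kappa$ locally uniformly as $\tau\to+\infty$ with the two tail estimates above (the ordered decay rates at $-\infty$, the common rate $\mu$ at $+\infty$) shows $[\tau^+,\infty)\subset\mathcal T$ for $\tau^+$ large; on the other hand $\phi^\tau(0)=\phi(\tau)\to0<\psi(0)$ as $\tau\to-\infty$, so $\mathcal T=[\tau^*,\infty)$ for some $\tau^*\in\R$, and $w:=\phi^{\tau^*}-\psi\ge0$. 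It remains to prove $w\equiv0$, for then $\phi(s)=\psi(s-\tau^*)$. Subtracting the copies of (\ref{pp1}) for $\phi^{\tau^*}$ and $\psi$ and using that $g$ is nondecreasing with $\phi^{\tau^*}\ge\psi$,
\begin{equation}\label{pp3}
c\,w'(s) + (2D+1)\,w(s) = D\,w(s+1) + D\,w(s-1) + F(s),
\end{equation}
where $F(s):=\sum_{k\in\Z}\beta(k)\bigl(g(\phi^{\tau^*}(s-ch-k))-g(\psi(s-ch-k))\bigr)\ge0$. Multiplying (\ref{pp3}) by $e^{(2D+1)s/c}$ shows that $e^{(2D+1)s/c}w(s)$ is nondecreasing; since $w(-\infty)=0$, this quantity is $\ge0$, and if $w(s_0)=0$ for some $s_0$ then it vanishes on $(-\infty,s_0]$, i.e. $w\equiv0$ there. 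Feeding this back into (\ref{pp3}) gives $0=D\,w(s+1)+F(s)$ for $s\le s_0$, whence (both summands being $\ge0$) $w\equiv0$ on $(-\infty,s_0+1]$; iterating the unit shift, $w\equiv0$ on all of $\R$. Thus either $w\equiv0$, or else $w>0$ everywhere, in which case the usual final step — uniform positivity of $w$ on compacta together with the tail control — lets one slide $\phi^{\tau^*}$ back to $\tau^*-\delta$ while keeping $\phi^{\tau^*-\delta}\ge\psi$, contradicting $\tau^*=\inf\mathcal T$.

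I expect the real obstacle to be precisely that concluding ``$w>0\Rightarrow$ slide a bit further'' step in the \emph{resonant} case, when $\phi^{\tau^*}$ and $\psi$ carry the same leading term at $-\infty$ (which, $\tau^*$ being an infimum, is exactly the expected outcome): then $w$ decays at $-\infty$ strictly faster than $e^{\lambda_2 s}$ and a naive translation ruins the inequality near $-\infty$. Here the full asymptotic expansion of \cite{AGT} (the analogue of (\ref{afe})--(\ref{afec})) is indispensable: one combines it with the fact that a nontrivial nonnegative solution of the linear relation (\ref{pp3}) cannot decay at $-\infty$ faster than the characteristic function (\ref{pp2}) permits, so that coincidence of the leading terms already forces $w\equiv0$. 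The absence of \emph{strict} monotonicity of $g$, on the other hand, costs nothing: in (\ref{pp3}) the nonlinearity only contributes the harmless nonnegative term $F$, and the propagation ``$w\equiv0$ on $(-\infty,s_0]\Rightarrow w\equiv0$ on $\R$'' is carried by the genuinely local discrete-Laplacian shifts $D\,w(s\pm1)$, not by any injectivity of $g$.
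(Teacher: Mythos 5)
Your overall strategy (sliding plus the asymptotics of \cite{AGT} at $-\infty$) is the same family of argument as the paper's, and your propagation-of-zeros dichotomy from (\ref{pp3}) --- integrate $cw'+(2D+1)w\ge 0$, then push the zero set forward by unit shifts through $Dw(s\pm1)$ --- is a genuinely clean way to see that non-strict monotonicity of $g$ costs nothing (the paper achieves the same via the inequality $\Delta(T+1)+\Delta(T-1)-2\Delta(T)\ge\Delta(T-1)>0$ at a leftmost touching point). However, there are two real gaps. First, you assert that the lattice fronts are globally monotone in $s$ ``cf.\ the reasoning behind Proposition \ref{pr1}''; that reasoning rests on \cite[Corollary 12]{TTT}, which is about equation (\ref{1}), not about (\ref{1dnl}). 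The paper explicitly states that global monotonicity for (\ref{1dnl}) is not known and proves only (Lemma \ref{prL}, via the bilateral Laplace transform) that each profile is strictly increasing on some $(-\infty,\rho)$. Your construction leans on global monotonicity in an essential way: without it, $\mathcal T=\{\tau:\phi^\tau\ge\psi\}$ need not be an up-set, so the reduction to a single critical translate $\tau^*$ collapses. Relatedly, your tail control at $+\infty$ invokes an expansion $\kappa-Q(s)e^{-\mu s}$ that is neither proved nor used in the paper; the paper gets by with $\phi(+\infty)=\psi(+\infty)=\kappa$ alone.

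Second, and more seriously, the step you yourself flag --- sliding past $\tau^*$ in the resonant case --- is exactly where the work lies, and your proposed fix (``a nontrivial nonnegative solution of (\ref{pp3}) cannot decay faster than the characteristic function permits'') is not established and is not how the paper resolves it. The paper's resolution lives at the \emph{other} end of the line: it first proves a Corollary~\ref{coco}--type statement that $\phi$ and $\psi$ must share the same asymptotic type at $-\infty$ (so that for \emph{every} small $\tau>0$ the hypothesis $\psi(\cdot+\tau)>\phi$ near $-\infty$ of the sliding lemma holds with room to spare, sidestepping the resonance), and then the contradiction at a touching point near $+\infty$ is extracted from the quantitative inputs $\max_{[\kappa-\sigma,\kappa]}g'\le 1$ and the kernel cutoff $\kappa\sum_{|k|\ge N_1}\beta(k)\le 0.5a_*(1-\max g')$, via the vertical-shift parameter $a_*$ and the horizontal-shift parameter $\tau_*$ inside Lemma \ref{l6}. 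Until you either prove your ``no fast decay of $w$'' lemma or restructure along these lines, the argument does not close.
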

\begin{remark}
 In Theorem \ref{main2}, the inequality $g'(\kappa) < 1$ is formally required. However, our proof uses more weak restriction  $g'(s) \leq 1, \ s \in [\kappa-\sigma,\kappa]$, where $\sigma$ is some positive number. 
\end{remark}
\begin{remark}
In various aspects, Theorem \ref{main4} improves and generalizes on the non-local case the main uniqueness theorem  from  \cite{mazou}.  In difference with the mentioned result, we do not  impose sub-tangency condition (\ref{st}) and we  admit critical (minimal) waves.  Next, the uniqueness result of \cite{mazou} is valid only for profiles having prescribed asymptotic behavior at $-\infty$.   Note also that our proof is rather short and does not  use the monotonicity of profiles. Now, condition $c\not=0$  seems to be essential:  \cite[Proposition 6.7]{cdm} suggests the possibility  of infinitely many wave solutions (perhaps, discontinuous) for $c=0$.
  On the other hand, Theorem \ref{main4} complements the main result of \cite{fwz} (which is valid only for non-critical waves), where (\ref{lc}) was assumed together with the symmetry $\beta(k)=\beta(-k)$.  Even though \cite{fwz} (see also \cite{AGT} for several improvements) allows to consider non-monotone nonlinearity $g$. 
\end{remark}
\subsection{Asymptotic formulas for the wave profiles}

It is well known \cite{ES} that in non-delayed case each critical wavefront which propagates at the velocity $c_* > c_\#$   
(i.e. so called {\it pushed wavefront})  has its profile converging to $0$ more rapidly than the near (i.e. propagating  with the speeds $c \approx c_*$)  non-critical  wavefront  profiles.    This contrasts with the case  $c_*= c_\#$, when the profile of the critical front  (so called {\it pulled wavefront})  converges to $0$ approximately at the same rate as the profile of each near wavefront does.  Similar asymptotics  were also established for wavefront solutions of lattice equation (\ref{1d}) without delay, see \cite[Theorem 3]{chfg}.  Our third main result shows that the pushed fronts to (\ref{1}) obey the same principle: 
\begin{theorem}\label{main3}   Assume    {\rm \bf(H)} and 
suppose that $u(t,x)= \phi(\nu\cdot x+ct)$,  
is a  traveling front to equation  (\ref{1}). Then the following asymptotic represantions are valid
(for an appropriate $s_0$ and some $\sigma >0$):  

1) if $c > c_*$ then $\phi(s+s_0)= e^{\lambda_2 t} + O(e^{(\lambda_2+ \sigma) t}), \ t \to -\infty$, 

2)  if $c = c_* > c_\#$,  then $\phi(s+s_0)= e^{\lambda_1 t} + O(e^{(\lambda_1+ \sigma) t}), \ t \to -\infty$. 
\end{theorem}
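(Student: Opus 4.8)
The plan is to derive everything from Proposition \ref{pr1}, whose asymptotic formulas (\ref{afe})--(\ref{afec}) already tell us that near $-\infty$ the profile behaves, up to translation, either like $e^{\lambda_j t}$ ($j \in \{1,2\}$) with a lower-order remainder, or — only in the critical case $c = c_\#$ — like $-t e^{\lambda_j t}$. So the whole content of the theorem is to \emph{exclude} the ``wrong'' exponent in each regime, i.e.\ to rule out the slow decay rate $e^{\lambda_2 t}$ when $c = c_* > c_\#$, and (vacuously, since $c > c_*$ forces $c > c_\#$) to pin down $\lambda_2$ when $c > c_*$. The mechanism is a comparison/sliding argument: if two admissible speeds coexist and the faster-decaying asymptotics were available at the minimal speed, one could build an ordered pair of sub/super-solutions at an intermediate speed and contradict minimality — this is exactly the ``base function'' philosophy the introduction attributes to \cite{chen,TPT}.

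First I would treat part 1), $c > c_*$. Here $c > c_* \ge c_\#$, so by Proposition \ref{pr1} the critical exponent case (\ref{afec}) does not occur and $\phi(t+s_0,c) = e^{\lambda_j t} + O(e^{(\lambda_j+\sigma)t})$ for $j = 1$ or $j = 2$, where $0 < \lambda_2(c) < \lambda_1(c)$ are the two roots of $\chi(z,c)=0$ with $p = g'(0)$. I must show $j = 2$, i.e.\ the profile decays at the \emph{slower} rate. Suppose for contradiction that $\phi \sim e^{\lambda_1 t}$. The idea is to use $\phi(\cdot,c)$ as a base function to construct an upper solution for a slightly smaller speed $c' \in (c_*, c)$ that still dominates an associated lower solution built from the known wavefront at speed $c'$ (which exists by Theorem \ref{main1}(a)); since a wave at speed $c'$ with the ``fast'' exponent at $-\infty$ cannot be squeezed between them, one gets a contradiction with the ordering, or alternatively one directly violates the non-existence statement below $c_*$ by iterating. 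Concretely, the monotone iteration between the ordered sub/super-solution pair would produce a wavefront at a speed strictly below $c_*$, contradicting Theorem \ref{main1}(b). The sign of $\chi(\lambda_1(c),c')$ for $c' < c$ is what makes the modified $e^{\lambda_1(c)t}$-tail a genuine supersolution — this is the computation to check, and it works precisely because $\lambda_1 > \lambda_2$ places $\lambda_1$ on the ``supersolution side'' of the characteristic parabola.

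For part 2), $c = c_* > c_\#$, again (\ref{afec}) is excluded because $c \ne c_\#$, so $\phi(t+s_0,c_*) = e^{\lambda_j t} + O(e^{(\lambda_j+\sigma)t})$ with $j \in \{1,2\}$, and now I must show $j = 1$, the \emph{faster} rate — this is exactly the pushed-front phenomenon. Here the argument runs the other way: if the critical profile decayed like $e^{\lambda_2 t}$, then for every $c$ slightly \emph{below} $c_*$ one could again use $\phi(\cdot,c_*)$ as a base function and, exploiting that $\chi(\lambda_2(c_*),c) < 0$ for $c < c_*$ (the slow root moves to the interior of the parabola as the speed drops), build an ordered pair of sub/supersolutions at speed $c$, hence a wavefront at speed $c < c_*$ — contradicting minimality. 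So the slow exponent is impossible at $c_*$, leaving $j = 1$. The main obstacle, and the part requiring genuine work rather than bookkeeping, is the $C^1$ (or conjugacy-at-corners) construction of the upper solution from the base function: because (\ref{1}) carries the Laplacian, the sub/supersolutions must be $C^1$-smooth or satisfy matching conditions at the gluing points, as the introduction stresses (cf.\ \cite{BN,CMP,TPT,wz}). One must graft the pure exponential tail $A e^{\lambda_{j}(c_*) t}$ onto the genuine profile $\phi(\cdot,c_*)$ across a transition window, tuning the amplitude $A$, the window location, and a small correction term so that (i) the splice is $C^1$, (ii) the differential inequality holds on both pieces — using the sign of $\chi$ on the tail and the strict inequality coming from $g(u) < g'(0)u$ failing only mildly near $0$ via (\ref{gcos}) on the bulk — and (iii) the resulting function still dominates (resp.\ is dominated by) the companion sub/supersolution. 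Once that construction is in hand, the monotone-iteration and the contradiction with Theorem \ref{main1}(b)/Theorem \ref{main2} are routine, and the error term $O(e^{(\lambda_j+\sigma)t})$ is inherited verbatim from Proposition \ref{pr1}.
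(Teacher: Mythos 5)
Your overall strategy for part 2) --- assume the critical front decays like $e^{\lambda_2 t}$, perturb the speed downward, and contradict minimality via an ordered sub/supersolution pair --- is exactly the paper's strategy. But both halves of your write-up contain concrete errors. For part 1) your argument is internally inconsistent: you propose to build a wavefront at a speed $c'\in(c_*,c)$ and then claim this ``produces a wavefront at a speed strictly below $c_*$, contradicting Theorem \ref{main1}(b)''; it does not, since $c'>c_*$ is an admissible speed and nothing is contradicted. The paper's actual proof of 1) is much shorter and runs the other way: the construction in the proof of Theorem \ref{main1} already exhibits, for every $c>c_*$, a front satisfying (\ref{ff}), i.e.\ with the slow rate $e^{\lambda_2 t}$, and the uniqueness Theorem \ref{main2} (or Corollary \ref{coco}) then forces every front at that speed to share this asymptotics.

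For part 2) the pivotal sign in your mechanism is wrong. Since $\partial\chi/\partial c=-z-pzhe^{-zch}<0$ for $z>0$, the function $\chi(z,\cdot)$ is strictly decreasing in $c$, so for $c<c_*$ one has $\chi(\lambda_2(c_*),c)>\chi(\lambda_2(c_*),c_*)=0$, not $<0$: as the speed drops, the root $\lambda_2(c)$ moves up, and $\lambda_2(c_*)$ lies \emph{outside} the interval $(\lambda_2(c),\lambda_1(c))$ on which $\chi(\cdot,c)$ is negative. Consequently a tail proportional to $e^{\lambda_2(c_*)t}$ grafted onto the profile sits on the subsolution side of the characteristic equation at the smaller speed and cannot serve as an upper solution; this is precisely the obstruction the paper's construction is built to circumvent. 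The paper takes the tail $Me^{\rho t}+ae^{\lambda'_2 t}$ with $\rho=(1+\theta)\lambda_2(c_*)$ and $\lambda'_2=\lambda_2(c')$, both strictly larger than $\lambda_2(c_*)$, so the tail decays faster than $\phi$; the middle piece must then be $\phi+\epsilon$ rather than $\phi$ (because $(c_*-c')\phi'(t)+g(\phi(t-c'h))-g(\phi(t-c_*h))>0$, so $\phi$ alone is not an upper solution at the speed $c'<c_*$), and the junction at $T_1$ necessarily carries a jump of size $\epsilon$ in the function itself. So, contrary to your point (i), the upper solution is \emph{not} $C^1$ --- it is not even continuous: the paper handles the jumps through the impulsive integral representation of Lemma \ref{imp}, and even tolerates a small positive residual of $E_+$ on $[T_1,T_1^+]$, absorbing it into the integral estimates. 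This discontinuous upper solution is the non-standard core of the proof, and your proposal as written does not reach it.
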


The proof of the  second formula is the most difficult part of this theorem. In order to establish that the  pushed fronts to (\ref{1}) satisfy  
2), it suffices to show that each wavefront having asymptotic behavior as in  1) is `robust' with respect to small perturbations of the velocity $c$.  This would imply the existence of wavefronts propagating at the velocity $c' < c_*$ provided that   the critical front behaves as  in 1).  The necessary  perturbation result  is demonstrated here with the use of  upper-lower solutions method. Note that, due to the use of a discontinuous upper solution, application of this  method in the paper is not at all standard.  
%
%
\section{Proof of Theorem \ref{main1}} Fix some $h>  0$ and consider 
$$
{\mathcal{C}}(h) := \{ c \geq 0: \ {\rm equation} \ (\ref{1}) \ {\rm has\ a\ wavefront \ propagating \ at \ the\ velocity\ } c \}.
$$
By Proposition  \ref{pr1},  ${\mathcal{C}}(h) $ contains subinterval $[c^*, +\infty)$ while 
$$
c_*: =\inf {\mathcal{C}}(h) \geq c_\# >0. 
$$
It is easy to see that ${\mathcal{C}}(h)$ is closed (cf. \cite[Lemma 26]{GT}) so that $c_* \in {\mathcal{C}}(h)$. 
Assume  that 
$c_0 \in {\mathcal{C}}(h)$ and let  $c' - c_0 >0$ be small enough to satisfy  $(1+\theta) \lambda_2(c') > \lambda_2(c_0)$.
Observe here that $\lambda_2(c)$ is a decreasing function of $c$.  Let $u(t,x) = \phi(\nu\cdot x +c_0t)$ be a wavefront 
moving at the velocity $c_0$, then $\phi$ solves 
$$
\phi''(t) - c_0\phi'(t) - \phi(t) + g(\phi(t-c_0h)) =0. 
$$
To simplify the notation, we will write  $\lambda_2' := \lambda_2(c'), \ \lambda_2:=\lambda_2(c_0)$. For the convenience of the reader, the proof is divided in several steps.  

\noindent \underline{Step I (Construction of a base function).} Set $\phi_\sigma(t): = \sigma \phi(t)$, where  $\sigma >1$ is close to $1$. We have 
$$
E(t,\sigma):= \phi_\sigma''(t) - c'\phi'_\sigma(t) - \phi_\sigma(t) + g(\phi_\sigma(t-c'h)) \leq  
$$
$$
\phi_\sigma''(t) - c_0\phi'_\sigma(t) - \phi_\sigma(t) + \sigma g(\phi(t-c_0h)) + [(c_0-c')\phi'_\sigma(t)+ g(\phi_\sigma(t-c_0h)) - \sigma g(\phi(t-c_0h))] = 
$$
$$
 (c_0-c')\phi'_\sigma(t)+ g(\phi_\sigma(t-c_0h)) - \sigma g(\phi(t-c_0h)).
$$
By our assumptions, $g(x) = g'(0)x + o(x), \ x \to 0$. Therefore 
$$
g(\phi_\sigma(t-c_0h)) - \sigma g(\phi(t-c_0h))= o (\phi(t-c_0h)), \ t \to - \infty. 
$$
On the other hand, we infer from Proposition \ref{pr1} that 
$$
 (c_0-c')\phi'_\sigma(t)=  (c_0-c')\zeta \sigma\phi(t) (1+ o(1)) = (c_0-c')\zeta \sigma e^{\zeta c_0 h}\phi(t-c_0h) (1+ o(1)). 
$$
for $\zeta \in \{\lambda_1(c_0), \lambda_2(c_0)\}$. 
As a consequence, there exists $T_1$ (which does not depend on $\sigma$) such that, for all $\sigma$ close to 1,  
$$
E(t,\sigma) < 0,\  t \leq T_1. 
$$
Now, due to our assumptions the function 
$
G(u) : = g(u)/u 
$
is $C^1$-smooth within some connected left neighborhood $\mathcal O$ of $\kappa$.  Since $G'(\kappa)= (g'(\kappa)-1)/\kappa <0$ we obtain that 
$$
G(u)-G(v) = G'(\theta)(u-v) < 0, \ u > v, 
$$
for all $u,v$ close to $\kappa$ (say, if $u,v \in {\mathcal I}:= [\kappa - \varepsilon, \kappa] \subset \mathcal O$. Actually, since $\phi(t)$ takes its value in $(0,1)$, without loss of generality, we may assume that $u,v \in {\mathcal I}= [\kappa - \varepsilon, \kappa+\varepsilon]$. Observe  that $\mathcal I$  does not depend on $\sigma$).  As a consequence, if $\phi_\sigma(t-c_0h), \ $ $\phi(t-c_0h) \in \mathcal I$, then 
$$
 (c_0-c')\phi'_\sigma(t)+ g(\phi_\sigma(t-c_0h)) - \sigma g(\phi(t-c_0h)) = 
$$
$$
 (c_0-c')\phi'_\sigma(t)+ \sigma \phi(t-c_0h)(G(\phi_\sigma(t-c_0h)) - G(\phi(t-c_0h))) <0. 
$$
Hence, we have proven that there exists $T_2$ such that,  for all $\sigma$ close to 1,  
$$
E(t,\sigma) < 0,\  t \geq T_2. 
$$
Finally, since uniformly on $[T_1,T_2]$
$$
 \lim_{\sigma \to 1}(c_0-c')\phi'_\sigma(t)+ g(\phi_\sigma(t-c_0h)) - \sigma g(\phi(t-c_0h)) = (c_0-c')\phi'(t) <0,
$$
we conclude that $E(t,\sigma) <0$ for all $t \in \R$ and $\sigma$ close to 1. 

\vspace{2mm}

\noindent  \underline{Step II (Construction of an upper solution).}  

For $a:=b^2,\  b \in (0,1]$, set $\phi_{b}(t): = \phi_\sigma(t) + ae^{\lambda'_2 t}+ be^{\lambda_2 t}$, where $\lambda'_2 =\lambda_2(c'),$ $ \ \lambda_2 =\lambda_2(c_0)$. Let $T_3= T_3(b)$ be that unique point where 
$\phi_b(T_3(b))=\kappa$. It is clear that $\phi'_b(T_3) >0$.  Next, we find  that 
$$
E_+(t,b):= \phi_b''(t) - c'\phi'_b(t) - \phi_b(t) + g(\phi_b(t-c'h)) =   E(t,\sigma)+ b\chi(c',\lambda_2)e^{\lambda_2 t} +
$$
$$
g( \phi_\sigma(t-c'h) + ae^{\lambda'_2 (t-c'h)}+be^{\lambda_2(t-c'h)})-   g(\phi_\sigma(t-c'h)) - g'(0)(ae^{\lambda'_2 (t-c'h)}+be^{\lambda_2(t-c'h)})\leq 
$$
$$
 E(t,\sigma) +b\chi(c',\lambda_2)e^{\lambda_2 t}+$$
$$
C(ae^{\lambda'_2 (t-c'h)}+be^{\lambda_2(t-c'h)})( \phi_\sigma(t-c'h) + ae^{\lambda'_2 (t-c'h)}+
be^{\lambda_2(t-c'h)})^\theta\leq  E(t,\sigma) +
$$
$$
be^{\lambda_2 t}\left(\chi(c',\lambda_2)+3Ce^{-\lambda_2 c'h}(be^{(\lambda'_2-\lambda_2) (t-c'h)}+1)
(\phi^\theta_\sigma(t-c'h)+2b^\theta e^{\lambda'_2\theta (t-c'h)} )\right)\leq E(t,\sigma) +
$$
$$
be^{\lambda_2 t}\left(\chi(c',\lambda_2)+C_1 
\phi^\theta_\sigma(t-c'h)+C_2b^\theta e^{(\lambda'_2(1+\theta)-\lambda_2) (t-c'h)} +C_3b \phi^\theta_\sigma(t-c'h)e^{(\lambda'_2-\lambda_2)t }\right)\leq 
$$
$$
E(t,\sigma) + be^{\lambda_2 t}\left(\chi(c',\lambda_2)+C_4 e^{\nu t}\right), \quad t \leq T_4,
$$
for some positive $\nu, \ C_j$ and negative $T_4$ (which does not depend on $b$).   Since $\chi(c',\lambda_2)<0$, we may choose  $T_4$ is  such a way that 
$E_+(t,b) <0$ for all $t \leq T_4,$ $ b \in (0,1]$.  On the other hand, we know that, uniformly on each compact interval,  $E_+(t,b) \to E(t,\sigma),  b \to 0+$.  Therefore $E_+(t,b) <0$ for all $t \leq T_3(0+)+1$ for all sufficiently small $b$. 

\noindent Now, let us  define an upper solution $\phi_+$ by 
$
\phi_+(t): = \min\{\kappa, \phi_b(t)\}. 
$
It is clear that  $\phi_+(t)$ is continuous and piece-wise $C^1$  on $\R$, being $t_0:= T_3(b)$  the unique point  of discontinuity 
of the derivative where $\Delta \phi_+'|_{t_0}:= \phi_+'(t_0+)- \phi_+'(t_0-) = - \phi_b'(t_0-) <0$. 

\noindent \underline{Step III (Construction of a lower solution).} 
Consider the  following concave monotone  linear rational  function 
$$p(x):= \frac{g'(0)x}{1 +Ax} \leq g'(0)x, \ x \geq 0,  \ A: =  2\frac{g'(0)-1}{\kappa}, \ p(0)=0, \ p(\frac{\kappa}{2})= \frac{\kappa}{2},$$
and set $g_-(x) : = \min \{g(x),p(x)\}$. It is clear that $g_-$ is continuous and increasing and that 
$$g_-'(0)= g'(0), \ g_-(0)=0, \ g_-(\frac{\kappa}{2})= \frac{\kappa}{2}, \quad g_-(x) \leq g'(0)x, \ x \geq 0.$$
Moreover, in some right neighborhood of $0$, function $g_-(x)$ meets the smoothness condition of {\rm \bf(H)}. 
This implies the existence of a monotone positive function $\phi_-, \ \phi_-(-\infty)=0,$ $\phi_-(+\infty)= \kappa/2,$ satisfying the equation
$$
\phi_-''(t) - c'\phi_-'(t) - \phi_-(t) + g_-(\phi_-(t-c'h)) =0, 
$$
e.g., see \cite[Theorem 4]{TAT}. Due to the  property $g_-(x) \leq g'(0)x, \ x \geq 0$, we also know that 
$$
(\phi_-, \phi'_-)(s+s_0,c)= e^{\lambda'_2 t}(1, \lambda'_2) + O(e^{(\lambda'_2+ \sigma) t}), \ t \to -\infty. 
$$
Finally,  since $g_-(x) \leq g(x)$ we obtain that
$$
\phi_-''(t) - c'\phi_-'(t) - \phi_-(t) + g(\phi_-(t-c'h)) \geq 0. 
$$
\underline{Step IV (Iterations).} 
Comparing asymptotic representations of monotone functions  $\phi_-(t)$ and $\phi_+(t)$  at $-\infty$, we   find easily that 
$$
\phi_-(t+s_1) \leq  \phi_+(t), \quad t \in \R, 
$$ 
for some appropriate $s_1$. Simplifying, we will suppose that $s_1=0$.  
In the next stage of the proof, we need the following simple result:
\begin{lemma}\label{imp} Let $\psi: {\mathbf R} \to {\mathbf R}$ be a bounded classical solution of
the second order  impulsive equation
\begin{equation}\label{imin}
\psi'' -c\psi' - \psi =f(t), \quad \Delta \psi|_{t_j} = \alpha_j,
\quad \Delta \psi'|_{t_j} = \beta_j,
\end{equation}
where $\{t_j\}$ is a finite increasing sequence, $f: {\mathbf R} \to {\mathbf R}$ is
bounded and continuous at every $t \not= t_j$ and the operator
$\Delta$ is defined by $\Delta w|_{t_j} := w(t_j+)-w(t_j-)$. Assume
that equation $z^2 -c z -1 =0$ has two real roots $\xi_1< 0<\xi_2, \ \xi_j= \xi_j(c)$.
Then
\begin{eqnarray}\label{pfon}&& \psi(t)=\frac{1}{\xi_1
- \xi_2} \left(\int^t_{-\infty} e^{\xi_1 (t-s)}f(s)ds +
\int_t^{+\infty}e^{\xi_2 (t-s)}f(s)ds\right) \\ &&  \nonumber
 +\frac{1}{\xi_2 - \xi_1}\left[\sum_{t<t_j}e^{\xi_2(t-t_j)}(\xi_1\alpha_j
 -\beta_j)+\sum_{t>t_j}e^{\xi_1(t-t_j)}(\xi_2\alpha_j-\beta_j)\right], \
 \ t\not=t_j.
\end{eqnarray}
\end{lemma}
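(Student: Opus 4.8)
The plan is to treat \eqref{imin} as a linear constant-coefficient problem and build the solution by superposition: first solve the continuous part $\psi''-c\psi'-\psi=f(t)$ on all of $\R$ by variation of parameters (equivalently, via the Green's function for the operator $L=\frac{d^2}{dt^2}-c\frac{d}{dt}-1$), then add a correcting function that absorbs the prescribed jumps $\alpha_j$ in $\psi$ and $\beta_j$ in $\psi'$ at the finitely many points $t_j$, and finally check that the full expression \eqref{pfon} is the \emph{unique} bounded solution.

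First I would record that the homogeneous equation has characteristic roots $\xi_1<0<\xi_2$ with $\xi_1+\xi_2=c$, $\xi_1\xi_2=-1$, so $\{e^{\xi_1 t}, e^{\xi_2 t}\}$ is a fundamental system with Wronskian $W=(\xi_2-\xi_1)e^{ct}$. The bounded particular solution of $Lv=f$ is then the familiar two-sided convolution
\[
v(t)=\frac{1}{\xi_1-\xi_2}\left(\int_{-\infty}^t e^{\xi_1(t-s)}f(s)\,ds+\int_t^{+\infty}e^{\xi_2(t-s)}f(s)\,ds\right),
\]
which is well defined and bounded because $f$ is bounded, $\xi_1<0$ controls the integral over $(-\infty,t]$ and $\xi_2>0$ controls the integral over $[t,+\infty)$; one differentiates under the integral sign twice to confirm $Lv=f$, the boundary terms from the two integrals cancelling correctly against the $f(t)$ contribution. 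Next, for a single jump at a point $\tau$ with data $\Delta\psi|_\tau=\alpha$, $\Delta\psi'|_\tau=\beta$, I would look for a solution of $L w=0$ away from $\tau$, bounded, continuous except for the prescribed jumps. Writing $w(t)=A\,e^{\xi_2(t-\tau)}$ for $t<\tau$ (bounded as $t\to-\infty$) and $w(t)=B\,e^{\xi_1(t-\tau)}$ for $t>\tau$ (bounded as $t\to+\infty$), the jump conditions give $B-A=\alpha$ and $\xi_1 B-\xi_2 A=\beta$, hence $A=(\xi_1\alpha-\beta)/(\xi_1-\xi_2)$ and $B=(\xi_2\alpha-\beta)/(\xi_2-\xi_1)$; summing such elementary solutions over the finite set $\{t_j\}$ produces exactly the bracketed impulsive terms in \eqref{pfon}. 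Adding $v$ and this impulsive part gives a bounded function satisfying \eqref{imin}.

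For uniqueness, I would observe that the difference of two bounded solutions of \eqref{imin} solves $Lq=0$ on each interval between consecutive $t_j$ with \emph{zero} jumps at every $t_j$, hence $q\in C^1(\R)$ and is a classical (indeed $C^2$) solution of $Lq=0$ on all of $\R$; since $q$ is bounded and the general solution $c_1e^{\xi_1 t}+c_2 e^{\xi_2 t}$ is bounded only when $c_1=c_2=0$, we get $q\equiv 0$. Finally I would verify that the constructed $\psi$ is genuinely a bounded \emph{classical} solution — i.e.\ $C^2$ away from the $t_j$ and satisfying the equation pointwise there — by checking differentiability of $v$ (inherited from boundedness and local behavior of $f$, which is continuous off the $t_j$) and noting $w$ is smooth off $\tau$.

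The routine parts are the variation-of-parameters computation and the $2\times 2$ linear system for the jumps; the only mild subtlety, and the step I would be most careful about, is justifying the convergence and differentiation of the two improper integrals defining $v$ (and the cancellation of boundary terms when differentiating), together with the observation that $f$ need only be bounded and continuous off the finite set $\{t_j\}$ for $v$ to be $C^1$ with $Lv=f$ in the classical sense away from those points — this is exactly the regularity needed later when \eqref{imin} is applied with a right-hand side built from the (merely piecewise-$C^1$) upper solution $\phi_+$.
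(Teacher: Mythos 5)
Your proof is correct in substance, but it is a genuinely different (and in fact more complete) route than the paper's. The paper disposes of Lemma \ref{imp} in one line: it cites \cite{TPT} and remarks that formula (\ref{pfon}) can be checked to satisfy (\ref{imin}) by direct substitution — i.e.\ it only verifies that the right-hand side of (\ref{pfon}) \emph{is} a bounded solution, leaving implicit the uniqueness step needed to conclude that the \emph{given} bounded $\psi$ coincides with it. You instead derive the formula constructively (Green's function for $L=\frac{d^2}{dt^2}-c\frac{d}{dt}-1$ plus an explicit $2\times 2$ system for each jump) and then supply the missing uniqueness argument: the difference of two bounded solutions has zero jumps, is therefore $C^1$ and hence a classical bounded solution of $Lq=0$ on all of $\R$, which forces $q\equiv 0$ since $\xi_1<0<\xi_2$. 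That is exactly the right way to close the logical loop, and your attention to the regularity of $f$ (continuous only off the finite set $\{t_j\}$) is the relevant point for the later application to the discontinuous upper solution. One small slip: solving $B-A=\alpha$, $\xi_1B-\xi_2A=\beta$ gives $A=(\xi_1\alpha-\beta)/(\xi_2-\xi_1)$, not $(\xi_1\alpha-\beta)/(\xi_1-\xi_2)$; with this sign corrected (your $B$ is already right) the summed impulsive part matches the bracket in (\ref{pfon}), whose common prefactor is $1/(\xi_2-\xi_1)$, and one checks directly that it reproduces the jumps $\alpha_j$ in $\psi$ and $\beta_j$ in $\psi'$.
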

\begin{proof}
See \cite{TPT}.  Alternatively,  it can be checked by a
straightforward substitution that $\psi$ defined by (\ref{pfon})
verifies  equation (\ref{imin}). \end{proof}
Similarly to \cite{wz}, we also consider the monotone integral operator 
$$
(\mathcal{A}\phi)(t): = \frac{1}{\xi'_2
- \xi'_1} \left(\int^t_{-\infty} e^{\xi'_1 (t-s)}g(\phi(s-c'h))ds +
\int_t^{+\infty}e^{\xi'_2 (t-s)}g(\phi(s-c'h))ds\right), $$
where $\xi'_j: = \xi_j(c').$ Using properties of  functions $\phi_-(t)$ and $\phi_+(t)$,  we deduce from 
Lemma \ref{imp} that 
$$
\phi_-(t) \leq (\mathcal{A}\phi_-)(t) \leq (\mathcal{A}^2\phi_-)(t)\leq \dots \leq (\mathcal{A}^2\phi_+)(t) \leq  (\mathcal{A}\phi_+)(t) \leq    \phi_+(t), \quad t \in \R. 
$$
The latter  implies (see \cite{wz} for more detail) the existence of a monotone solution $\phi(t)$ such that 
$$
(\mathcal{A}\phi)(t) = \phi(t),  \quad \phi_-(t) \leq  \phi(t) \leq  \phi_+(t), \quad t \in \R. 
$$
This amounts to the existence of a wavefront propagating at velocity $c'$. Moreover, the latter estimations shows that, for some $s_0$ and positive $\delta$, 
\begin{equation}\label{ff}
\phi(s+s_0)= e^{\lambda'_2 t} + O(e^{(\lambda'_2+ \delta) t}), \ t \to -\infty.
\end{equation}
Finally, to prove that ${\mathcal{C}}(h)$ coincides with the interval $[c_*, \infty)$, let us consider the open set 
$O= [c_*,\infty)\setminus {\mathcal{C}}(h)$.  If $O \not=\emptyset$, we  take one connected component 
of $O$, say $(c_0,c_1)$.  Since $c_0 \in {\mathcal{C}}(h)$, there is some  $c'\in (c_0,c_1)$ such that 
$c' \in {\mathcal{C}}(h)$, in contradiction to the definition of $O$. Therefore ${\mathcal{C}}(h)=[c_*, \infty)$.

\section{Proof of Theorem \ref{main2}} In our proof which was inspired by Coville work \cite{co},  we invoke the sliding method developed by Berestycki and Nirenberg  \cite{BN,chen,co,cdm}. 
\begin{lemma} \label{l6} Fix some $c \geq c_*$ and suppose that $\phi, \psi$ are two wavefront profiles such that, for some finite $T$, 
\begin{equation}\label{nado}
\phi(t) < \psi(t),  \quad t <  T. 
\end{equation}
Then $\phi(t) < \psi(t)$  for all $t \in \R$. 
\end{lemma}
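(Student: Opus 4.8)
The plan is to exploit the sliding method in the following form. Since $\phi$ and $\psi$ are wavefront profiles, both converge to $0$ at $-\infty$ and to $\kappa$ at $+\infty$, and both satisfy $\phi' > 0$, $\psi' > 0$ by Proposition \ref{pr1}. First I would observe that, because of the strict ordering $\phi < \psi$ on $(-\infty, T)$ together with the monotone convergence to $\kappa$, the set
$$
\Sigma := \{ \tau \ge 0 : \psi(t+\tau) \ge \phi(t) \ \text{for all}\ t \in \R \}
$$
is nonempty (any sufficiently large $\tau$ works, since $\psi(t+\tau)$ is close to $\kappa$ while $\phi$ is bounded below $\kappa$ — made precise using $\psi(+\infty)=\kappa$ and $\phi(-\infty)=0$), and is a closed half-line $[\tau_*, +\infty)$. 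The goal is to show $\tau_* = 0$; this gives $\psi(t) \ge \phi(t)$ on all of $\R$, and then the \emph{strict} inequality follows from a second, easier, application of the argument below (or a strong-maximum-principle-type argument on the difference), ruling out contact.

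Suppose, for contradiction, that $\tau_* > 0$. Set $w(t) := \psi(t+\tau_*) - \phi(t) \ge 0$. Using that both profiles solve the traveling wave equation $\phi'' - c\phi' - \phi + g(\phi(\cdot - ch)) = 0$, subtract the two equations to obtain
$$
w''(t) - c w'(t) - w(t) + \big[ g(\psi(t+\tau_* - ch)) - g(\phi(t - ch)) \big] = 0 .
$$
Since $g$ is strictly increasing and $\psi(t+\tau_*-ch) \ge \phi(t-ch)$, the bracketed term is $\ge 0$, so $w'' - c w' - w \le 0$, i.e. $w$ is a nonnegative supersolution of a cooperative second-order operator whose characteristic equation $z^2 - cz - 1 = 0$ has one positive and one negative root. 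The behavior at the ends must be handled separately: at $+\infty$ one uses $g'(\kappa) < 1$ (or the weaker $g'(s) \le 1$ near $\kappa$ of Remark 1) so that near $\kappa$ the reaction is a contraction and $w$ cannot touch $0$ unless identically small; at $-\infty$ one uses the asymptotic formulas \eqref{afe}, \eqref{afec} from Proposition \ref{pr1} — $\phi$ and $\psi$ both decay like $e^{\lambda_j t}$ (possibly with a $-t$ factor), so $w(t) \to 0$ and, crucially, the ratio $\psi(t+\tau_*)/\phi(t)$ has a definite limit, which for $\tau_* > 0$ is $e^{\lambda_j \tau_*} > 1$, forcing $w(t) > 0$ for all sufficiently negative $t$. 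Hence $w > 0$ on a neighborhood of $-\infty$ and near $+\infty$; if $w$ vanishes somewhere it does so at an interior minimum $t_0 \in \R$, where $w(t_0) = 0$, $w'(t_0) = 0$, $w''(t_0) \ge 0$, contradicting $w'' - cw' - w \le 0$ at $t_0$ (which would give $w''(t_0) \le w(t_0) = 0$, hence $w''(t_0) = 0$, and then a Hopf-type argument on the impulse-free equation, or differentiating further, propagates $w \equiv 0$ locally — contradicting $w > 0$ near the ends). Therefore $w > 0$ everywhere, i.e. $\psi(t + \tau_*) > \phi(t)$ strictly for all $t$.

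The final step is to push $\tau_*$ down: from $\psi(\cdot + \tau_*) > \phi$ strictly, together with $\psi' > 0$ bounded below on compacts and control of both functions near $\pm\infty$ (the asymptotics at $-\infty$ give the gap a uniform positive lower bound relative to $e^{\lambda_j t}$, and near $+\infty$ both are within $\varepsilon$ of $\kappa$ with $\psi$ still monotone), one shows $\psi(\cdot + \tau_* - \eta) \ge \phi$ still holds for small $\eta > 0$, contradicting minimality of $\tau_*$. Thus $\tau_* = 0$, giving $\psi \ge \phi$ on $\R$; and a last repetition of the interior-minimum argument with $\tau_* = 0$ upgrades this to $\psi > \phi$ on $\R$, which is the claim. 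I expect the main obstacle to be the behavior at $-\infty$: one must use the precise asymptotic representations \eqref{afe}/\eqref{afec} (including the critical case with the $-t e^{\lambda_j t}$ term) to guarantee that the "gap" $w$ stays strictly positive on a full neighborhood of $-\infty$ and that sliding back by a small $\eta$ does not create new contact there — a localized but delicate estimate — whereas the behavior near $+\infty$ is routine once $g'(\kappa) < 1$ is invoked.
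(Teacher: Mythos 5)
Your overall strategy (horizontal sliding, an interior touching point, strict monotonicity of $g$, and $g'\le 1$ near $\kappa$) points in the right direction, but there is a genuine gap at $+\infty$ --- which is in fact the crux of the paper's proof and which you dismiss as ``routine''. First, your argument that the sliding set $\Sigma$ is nonempty fails: $\phi$ is \emph{not} uniformly bounded below $\kappa$, since $\phi(+\infty)=\kappa$. For any $\tau$ the difference $\psi(t+\tau)-\phi(t)$ tends to $0$ as $t\to+\infty$, and without precise information on the decay rates of $\kappa-\phi$ and $\kappa-\psi$ at $+\infty$ (which neither you nor Proposition \ref{pr1} provide) nothing prevents $\psi(\cdot+\tau)$ from dipping below $\phi$ near $+\infty$ for \emph{every} $\tau$. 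Second, even granting that $\tau_*$ exists, your interior-minimum argument needs the infimum of $w=\psi(\cdot+\tau_*)-\phi$ to be attained at a finite point $t_0$; since $w\to 0$ at both ends, the infimum may only be approached as $t\to+\infty$ and never attained. The same defect reappears when you slide back by $\eta$: strict positivity of $w$ gives no positive lower bound near $+\infty$, so a small backward translation can create new crossings there.

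The paper closes exactly this hole by introducing an additional \emph{vertical} shift. It sets $a_*=\inf\{a\ge 0:\ \psi+a\ge\phi\ \mbox{on}\ \R\}$, so that $w=\psi+a_*-\phi$ satisfies $w(\pm\infty)=a_*$ and, if $a_*>0$, its zero minimum is attained at a finite leftmost point $t_m$ which, by hypothesis (\ref{nado}), lies to the right of $T$. Working in the regime where both profiles take values in $(\kappa-\sigma,\kappa)$ with $\max_{[\kappa-\sigma,\kappa]}g'\le 1$ (Case I), the profile equation at $t_m$ yields $0\ge a_*+g(\psi(t_m-ch))-g(\phi(t_m-ch))\ge a_*\bigl(1-g'(\theta)\bigr)\ge 0$, with equality impossible because $t_m$ is leftmost --- a contradiction. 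The horizontal sliding by $\tau$ is then used only to reduce the general situation to this one, by showing that the first contact point $T_\varepsilon$ of $\psi(\cdot+\tau_*-\varepsilon)$ with $\phi$ escapes to $+\infty$ as $\varepsilon\to 0+$, after which Case I applies. You would need to incorporate this vertical-shift device (or an equivalent quantitative barrier near $\kappa$) for your plan to go through; the interior-touching-point computation you do have is essentially the paper's $a_*=0$ step, provided you take $t_0$ to be the \emph{leftmost} zero of $w$ so that $w(t_0-ch)>0$ strictly, rather than appealing to a vague Hopf-type propagation argument.
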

\begin{proof} 
Set $a_* = \inf\mathcal{A}$ where
$$
\mathcal{A}: = \{a \geq 0: \psi(t)+a \geq \phi(t), \ t \in \R \}. 
$$
Note  that $\mathcal{A}\not=\emptyset$ since $[\kappa, +\infty) \subset \mathcal{A}$. Moreover, $a_* \in \mathcal{A}$.

Now, if $a_* =0$ then  $\psi(t) \geq \phi(t), \ t \in \R.$ We claim that, in fact, $\psi(t) > \phi(t), \ t \in \R.$
Indeed, otherwise we can suppose that $T$ is such that $\phi(T)=\psi(T)$.  In this way, the difference
$\psi(t)- \phi(t) \geq 0$ reaches its minimal value $0$ at $T$, while $\psi(T-ch) > \phi(T-ch)$. 
But then we get  a contradiction: 
\begin{eqnarray}\label{pt}
0= (\psi''(T)-\phi''(T)) - c(\psi'(T)-\phi'(T)) - (\psi(T)-\phi(T)) +\nonumber \\
(g(\psi(T-ch))- g(\phi(T-ch))) >0. 
\end{eqnarray}
In this way,  Lemma \ref{l6} is proved when $a_* =0$ and consequently we may assume that $a_* >0$. 
Let $\sigma >0$ be small enough to satisfy
$$\max_{s \in [\kappa-\sigma, \kappa]} g'(s) \leq 1.$$
\underline{Case I.} 
First, we assume that $T$ is such that, additionaly
\begin{equation}\label{blun}
\phi(t), \psi(t) \in (\kappa-\sigma, \kappa), \ t \geq T-ch.
\end{equation} 
In such a case non-negative function 
$$w(t): = \psi(t) +a_* - \phi(t), \quad w(\pm \infty) = a_*>0, $$  
reaches its minimal value $0$  at some leftmost point $t_m$, where 
$$
\psi(t_m) - \phi(t_m) =-a_*, \quad \psi'(t_m) - \phi'(t_m) =0, \quad \psi''(t_m) - \phi''(t_m) \geq 0. 
$$
Since $\psi(t_m) < \phi(t_m)$, we have that $t_m > T$, so that 
$$\psi(t_m-ch) , \phi(t_m-ch) \in (\kappa-\sigma, \kappa).$$
In consequence,  for some $\theta \in (\kappa-\sigma, \kappa)$, 
\begin{eqnarray}\label{pt2}
0= (\psi''(t_m)-\phi''(t_m)) - c(\psi'(t_m)-\phi'(t_m)) - (\psi(t_m)-\phi(t_m)) + \nonumber \\
(g(\psi(t_m-ch))- g(\phi(t_m-ch))) \geq a_* + g(\psi(t_m-ch))- g(\phi(t_m-ch)) \geq 
\end{eqnarray}
$$
\left\{
\begin{array}{ll}   a_* >0,
    &\ {\rm if} \   \psi(t_m-ch) \geq  \phi(t_m-ch);\\
   a_* + g'(\theta)(\psi(t_m-ch)- \phi(t_m-ch))>  0,  & \ {\rm if} \   \psi(t_m-ch) - \phi(t_m-ch) \in [-a_*,0). 
\end{array}%
\right.
$$
a contradiction. Observe that the strict inequality in the last line 
can be explained in the following way.  
The sign "$\geq$" can be replaced with "$=$" in 
$$a_* + g(\psi(t_m-ch))- g(\phi(t_m-ch)) = a_* + g'(\theta)(\psi(t_m-ch)- \phi(t_m-ch)) \geq 0, $$ 
 if and only if  $g'(\theta)=1$ and $\psi(t_m-ch)- \phi(t_m-ch)= -a_*$.  This, however, is impossible due to the 
 definition of $t_m$ as the leftmost point where $w(t_m)=0$. 
\noindent \underline{Case II.} If (\ref{blun}) does not hold,  then, due to the convergence of profiles at $+\infty$,  we can find large $\tau>0$ and $T_1 > T$  such that 
$$
\psi(t+\tau) > \phi(t),\  t < T_1, \quad \phi(t),\psi(t+\tau) \in (\kappa-\sigma, \kappa), \ t \geq T_1-ch.  
$$
Therefore, in view of  the result established in Case I, we obtain that 
 \begin{equation}\label{bla}
\psi(t+\tau) > \phi(t), \quad t \in \R.   
\end{equation} 
Define now $\tau_*$ by
$$
\tau_*:= \inf \{\tau \geq 0: \ {\rm inequality} \ (\ref{bla}) \ {\rm holds}\}. 
$$
It is clear that $\psi(t+\tau_*) \geq \phi(t), \ t \in \R$.  Since, in addition, 
$$\psi(t+\tau_*) \geq \psi(t) > \phi(t), \ t <T,$$ we conclude that $\psi(t+\tau_*) > \phi(t), \ t \in \R$, cf. (\ref{pt}). 
Now, if  $\tau_*=0$, then Lemma \ref{l6} is  proved.  
Otherwise, $\tau_* >0$ and for each $\varepsilon \in (0,\tau_*)$ there exists a unique $T_\varepsilon > T$ such that 
$$
\psi(t+\tau_*-\varepsilon) > \phi(t), \ t <T_\varepsilon, \  \psi(T_\varepsilon+\tau_*-\varepsilon) = \phi(T_\varepsilon). 
$$
It is immediate to see that $\lim T_\varepsilon = +\infty$ as $\epsilon \to 0+$.  Indeed, if $T_{\varepsilon_j} \to T'$  for some finite $T'$ and $\varepsilon_j \to 0+$, then we get a contradiction: $\psi(T'+\tau_*) = \phi(T')$.  Therefore, if $\varepsilon$ is small, then 
$$
\psi(t+\tau_*-\varepsilon), \phi(t) \in (\kappa-\sigma, \kappa), \ t \geq T_\varepsilon-ch, 
$$
that is  $\psi(t+\tau_*-\varepsilon) $ and $\phi(t)$ satisfy condition (\ref{blun}) required in  Case I.  Thus we get 
$\psi(t+\tau_*-\varepsilon) > \phi(t)$ for all $t \in \R$, a contradiction to the definition of $h_*$.  This means that 
$\tau_* =0$ and the proof of Lemma \ref{l6} is  completed.   
\end{proof}
\begin{corollary} \label{coco}For a fixed $c \geq c_*$, both $\phi$ and $\psi$ have the same type of asymptotic 
behaviour at $-\infty$ described in Proposition \ref{pr1}. 
\end{corollary}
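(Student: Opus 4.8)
The plan is to argue by contradiction, the only real tool being the comparison statement of Lemma \ref{l6}. By Proposition \ref{pr1}, each of $\phi$ and $\psi$, after a suitable translation, decays at $-\infty$ like one of at most two ``rate functions'': if $c>c_\#$ these are $e^{\lambda_1 t}$ and $e^{\lambda_2 t}$ with $\lambda_1>\lambda_2$, while if $c=c_\#$ (so that $\lambda_1=\lambda_2=:\lambda$) they are $e^{\lambda t}$ and $-t\,e^{\lambda t}$. In either case one of the two possibilities is $o(\cdot)$ of the other as $t\to-\infty$; thus ``$\phi$ and $\psi$ have the same type'' fails precisely when, after relabelling, $\phi$ carries the faster-decaying asymptotics, i.e.\ (with the translates $s_0$ of Proposition \ref{pr1} fixed) $\phi(t)/\psi(t)\to 0$ as $t\to-\infty$. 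I will derive a contradiction from this.

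The first step is to note that for \emph{every} fixed $s\in\R$ one still has $\phi(t+s)/\psi(t)\to 0$ as $t\to-\infty$: replacing $t$ by $t+s$ in the rate function of $\phi$ only multiplies it by a bounded factor (namely $e^{\lambda_1 s}$, resp.\ an $s$-dependent constant, according to the two cases), so the ratio is unchanged up to a constant and still vanishes at $-\infty$. In particular there is a finite $T(s)$ with $\phi(t+s)<\psi(t)$ for all $t<T(s)$. Since $t\mapsto\phi(t+s)$ is again a wavefront profile at the speed $c$, Lemma \ref{l6} upgrades this to $\phi(t+s)<\psi(t)$ for \emph{all} $t\in\R$.

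Finally, fix $t=0$: then $\phi(s)<\psi(0)$ for every $s\in\R$, and letting $s\to+\infty$ gives $\kappa=\phi(+\infty)\le\psi(0)$, which contradicts $\psi(0)<\psi(+\infty)=\kappa$ (recall $\psi$ is strictly increasing, again by Proposition \ref{pr1}). Hence $\phi$ and $\psi$ must share the same rate function; since the leading coefficient is normalised to $1$ in the asymptotic formulas of Proposition \ref{pr1}, this is exactly the assertion.

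I do not expect any deep obstacle: the content of the corollary is carried entirely by Lemma \ref{l6}, and what remains is bookkeeping. The one point to treat with some care is the case split — one must check that the two alternatives of Proposition \ref{pr1} are genuinely exhaustive for a fixed $c$ and that the degenerate alternative $-t\,e^{\lambda t}$ arises only when $\lambda_1=\lambda_2$, so that a comparison of the form ``$e^{\lambda_1 t}$ versus $-t\,e^{\lambda_2 t}$ with $\lambda_1\neq\lambda_2$'' never needs to be made, and likewise that translating $\phi$ never upsets the relation $\phi(t+s)/\psi(t)\to 0$.
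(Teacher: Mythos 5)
Your argument is correct and is essentially the paper's own proof: both assume the two profiles have different decay types, observe that the faster-decaying one (arbitrarily translated) lies below the other near $-\infty$, invoke Lemma \ref{l6} to propagate the inequality to all of $\R$, and then let the shift tend to $+\infty$ to reach a contradiction with $\phi(+\infty)=\psi(+\infty)=\kappa$. Your version is in fact slightly cleaner, since you state the comparison with the correct inequality orientation and spell out the exhaustiveness of the two asymptotic alternatives, points the paper leaves implicit.
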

\begin{proof} For example, suppose that $\phi(t) \sim e^{\lambda_2 t}$ and $\psi(t) \sim e^{\lambda_1 t}$ as $t \to -\infty$. 
Then for every fixed $\tau \in \R$ there exists $T(\tau)$ such that $\psi(t+\tau) > \phi(t)$ for all $t < T(\tau)$.  Applying 
Lemma \ref{l6}, we obtain that  
 $\psi(s) > \phi(t)$ for every $s:=t+\tau, t  \in \R,$ what is clearly false. 
\end {proof}
\begin{proof}[Proof of Theorem \ref{main2}] By Corollary \ref{coco}, we can suppose that $\psi(t)$ and $\phi(t)$ have the same type (described in Proposition \ref{pr1}) of asymptotic behavior at $-\infty$. 
Consequently,  $\psi(t+\tau), \phi(t)$ satisfy condition (\ref{nado}) of  Lemma \ref{l6} for every small $\tau >0$. But then 
 $\psi(t+\tau) >  \phi(t)$ for every small $\tau >0$ that yields   $\psi(t) \geq  \phi(t), \ t \in \R$. By symmetry,  we also find that 
$\phi(t) \geq  \psi(t), \ t \in \R$, and Theorem \ref{main2} is proved. 
\end {proof}
\section{Proof of Theorem \ref{main4}}  It is easy to see that each wave profile $\varphi$  verifies
\begin{equation}\label{sy1}
c\varphi'(t)= D[\varphi(t+1)+\varphi(t-1) -2\varphi (t)] - \varphi (t) + \sum_{k\in \Z}\beta(k)g(\varphi(t-k-ch)).
\end{equation}
First we note that $\varphi(t)$ takes its value in $(0,\kappa)$. Indeed,  suppose for a moment that $s_0$ is the leftmost point where $M: = \varphi(s_0) = \sup_{s \in \R} \varphi (s) \geq \kappa$.  Then  $\varphi'(s_0)=0$ and $\varphi(s_0+1)+\varphi(s_0-1) -2\varphi(s_0) <0, \ g(\varphi(s_0-k-ch)) \leq g(M)$. Consequently, $M< g(M), \ M \geq \kappa$, a contradiction.  

Second, we claim that  $\varphi(t)$ is strictly increasing at $-\infty$ (we believe that $\varphi$ is monotone on $\R$, cf. \cite{mazou}: however, for our purpose it suffices to establish the monotonicity of $\varphi(t)$  on some of intervals  $(-\infty, \rho)$).   Consider the characteristic function
$$
\tilde\chi(z,c):= 1+2D+cz -D(e^z +e^{-z})-g'(0)e^{-chz}\sum_{k\in \Z}\beta(k)e^{-kz}
$$ 
and the bilateral Laplace transform $\Phi(z):= \int_{\R}e^{-zs}\varphi(s)$. For each fixed  $c\not=0$ function 
$\tilde\chi(z,c)$ is analytic in the region $\Pi_1 = \{0 < \Re z < \gamma_\#\}$ of the complex plane $\C$ and has 
a finite number of roots in any subregion $\{0< \epsilon < \Re z <  \gamma_\#-\epsilon\}$, see \cite[Lemma 3.1]{fwz}.   Next, it was proved in \cite{AGT} that, under the conditions of Theorem \ref{main4},  $\Phi(z)$ is analytic in some maximal vertical strip $\Pi= \{0 < \Re z < \lambda \}\subset \Pi_1$ where $\lambda < \gamma_\#$ is a positive 
root (in difference with \cite{fwz}, not necessarily minimal and simple) of the equation $\tilde\chi(z,c)=0$.  Again using \cite[Lemma 3.1]{fwz} (or, alternatively, \cite[Lemma 2]{AGT}),  we obtain that there exists $r >0$ such that
\begin{equation}\label{1zl}
\{\lambda\} = \{z \in \C: \tilde \chi(z,c) =0, \ \lambda -r < \Re z < \lambda+r\}.   
\end{equation}
Moreover, $\varphi(t) = O(e^{\gamma t}), \ t \to -\infty,$ for each $\gamma \in (0,\lambda)$.  See Corollaries 1,3 and Theorem 6 in \cite{AGT} for more detail.  Yet we will need a stronger result: 
\begin{lemma} \label{prL} Under assumptions of Theorem \ref{main4},  we have that  
$$
\varphi (s+s_0,c)= (-t)^je^{\lambda t} + O(e^{(\lambda+ \sigma) t}),  \  \varphi'(t)= 
\lambda \varphi (t) (1+ o(1)), \ t \to -\infty.
$$
for an appropriate $s_0, \ j \in \{0,1\}$ and some $\sigma >0$. As a consequence, $\varphi$ is strictly increasing on some maximal open interval $(-\infty, \rho)$. 
\end{lemma}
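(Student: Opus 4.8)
The plan is to transform the profile equation (\ref{sy1}), to combine the already–established analyticity of $\Phi(z):=\int_\R e^{-zs}\varphi(s)\,ds$ in the maximal strip $\Pi=\{0<\Re z<\lambda\}$ with the local zero structure (\ref{1zl}) of $\tilde\chi(\cdot,c)$, and then to read off the behaviour of $\varphi$ at $-\infty$ by the asymptotic–integration method, exactly as in the proof of Proposition \ref{pr1} (see \cite{AGT} and \cite[Proposition 7.2]{FA}).

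\emph{Step 1: reduction and enlargement of the analyticity strip.} Splitting off the linearization of $g$ at $0$, put $Q(t):=\sum_{k\in\Z}\beta(k)\bigl(g(\varphi(t-k-ch))-g'(0)\varphi(t-k-ch)\bigr)$, so that (\ref{sy1}) reads $c\varphi'(t)-D[\varphi(t+1)+\varphi(t-1)-2\varphi(t)]+\varphi(t)-g'(0)\sum_k\beta(k)\varphi(t-k-ch)=Q(t)$, an equation whose symbol is precisely $\tilde\chi(z,c)$; hence $\tilde\chi(z,c)\,\Phi(z)=\widehat Q(z)$ with $\widehat Q(z):=\int_\R e^{-zs}Q(s)\,ds$, wherever both sides converge (in particular on $\Pi$). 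The crucial point is that $\widehat Q$ is analytic in a strip strictly larger than $\Pi$. Indeed, (\ref{gco}) gives $|g(u)-g'(0)u|\le Cu^{1+\theta}$ on $(0,\delta]$, and $\varphi(t)=O(e^{\gamma t})$ as $t\to-\infty$ for every $\gamma\in(0,\lambda)$; splitting the sum defining $Q$ according to whether $\varphi(t-k-ch)$ is small and using $B((1+\theta)\gamma)=\sum_k\beta(k)e^{-(1+\theta)\gamma k}<\infty$ together with the exponential smallness of the tails $\sum_{k\le-N}\beta(k)$ (both valid as long as $(1+\theta)\gamma<\gamma_\#$, by Cauchy--Hadamard) yields $Q(t)=O(e^{(1+\theta)\gamma t})$, $t\to-\infty$. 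Since $\theta>0$ and $\lambda<\gamma_\#$, the interval $\bigl(\lambda/(1+\theta),\,\min\{\lambda,\gamma_\#/(1+\theta)\}\bigr)$ is nonempty; fixing $\gamma$ there and setting $\sigma_0:=(1+\theta)\gamma-\lambda>0$, and using that $Q$ is bounded and continuous on $\R$, we conclude that $\widehat Q$ is analytic in $\{0<\Re z<\lambda+\sigma_0\}$. Hence $\Phi=\widehat Q/\tilde\chi(\cdot,c)$ is meromorphic there with poles among the zeros of $\tilde\chi(\cdot,c)$, and by (\ref{1zl}) the only such zero in $\{\lambda-r<\Re z<\lambda+r\}$ is $z=\lambda$.

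\emph{Step 2: order of the pole, inversion, derivative and monotonicity.} On the real axis $\tilde\chi(\cdot,c)$ is strictly concave, because each of $-De^{x}$, $-De^{-x}$ and $-g'(0)\beta(k)e^{-(k+ch)x}$ has negative second derivative while $1+2D+cx$ is affine; therefore $\lambda$ is a zero of $\tilde\chi(\cdot,c)$ of multiplicity $m\in\{1,2\}$. Since $\widehat Q$ is analytic across the line $\Re z=\lambda$ and, by (\ref{1zl}), nothing else on that line obstructs the continuation, the maximality of $\Pi$ forces $\Phi$ to have a genuine pole at $z=\lambda$, necessarily of order $j+1$ with $j\in\{0,1\}$. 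Inverting the bilateral Laplace transform and shifting the contour from $\{\Re z=\gamma_0\}$, $\gamma_0\in(0,\lambda)$, to $\{\Re z=\lambda+\sigma\}$ with $\sigma:=\tfrac12\min\{r,\sigma_0\}$ — which is legitimate by the asymptotic–integration lemma \cite[Proposition 7.2]{FA} (cf. \cite{AGT} and the proof of Proposition \ref{pr1}), since in the enlarged strip $\Phi$ has the single pole $z=\lambda$ and the required decay along vertical lines — picks up exactly the residue $\mathrm{Res}_{z=\lambda}\bigl(e^{zt}\Phi(z)\bigr)=P_j(-t)e^{\lambda t}$, where $P_j$ is a polynomial of exact degree $j$ whose leading coefficient is positive (because $\varphi>0$). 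A translation $t\mapsto t+s_0$ normalizing that leading coefficient to $1$ (the lower–order $e^{\lambda t}$ term, present when $j=1$, being absorbed into the remainder, as in (\ref{afec})) then gives $\varphi(t+s_0)=(-t)^je^{\lambda t}+O(e^{(\lambda+\sigma)t})$, $t\to-\infty$. The asymptotics of $\varphi'$ follow directly from (\ref{sy1}): substituting $\varphi(t)=(-t)^je^{\lambda t}(1+o(1))$ into its right–hand side, using $(-(t\pm1))^{j}=(-t)^{j}(1+o(1))$, $(-(t-k-ch))^{j}=(-t)^{j}(1+o(1))$ and the identity $D(e^{\lambda}+e^{-\lambda}-2)-1+g'(0)e^{-\lambda ch}B(\lambda)=c\lambda$ (which is just $\tilde\chi(\lambda,c)=0$ rewritten), one obtains $c\varphi'(t)=c\lambda(-t)^je^{\lambda t}(1+o(1))$, i.e. $\varphi'(t)=\lambda\varphi(t)(1+o(1))$ as $t\to-\infty$. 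In particular $\varphi'>0$ on some $(-\infty,\rho_0)$, so $\rho:=\sup\{b\in(-\infty,+\infty]:\varphi'>0\text{ on }(-\infty,b)\}\ge\rho_0>-\infty$ exhibits the maximal interval of strict monotonicity adjacent to $-\infty$. The main obstacle is Step 1 — pushing the analyticity of $\widehat Q$ strictly past $\Re z=\lambda$, which is exactly where the Hölder bound (\ref{gco}) (rather than a mere $o(u)$ estimate) and the hypothesis $\lambda<\gamma_\#$ are indispensable, and where the tails of the kernel $\beta$ must be controlled uniformly; once the enlarged strip and the simple/double pole dichotomy for $\Phi$ at $\lambda$ are in hand, the contour shift, the residue computation, and the derivative and monotonicity statements are routine.
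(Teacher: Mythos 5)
Your proposal is correct and follows essentially the same route as the paper: the same splitting of the nonlinearity into its linearization at $0$ plus a remainder $Q=-\mathcal{D}$, the same bilateral Laplace transform identity $\tilde\chi(z,c)\Phi(z)=\widehat Q(z)$, the same use of the H\"older bound and the tail control of $\beta$ to continue $\widehat Q$ analytically past $\Re z=\lambda$ (the paper works in $\{0<\Re z<(1+\theta)\lambda\}$ after assuming $(1+\theta)\lambda<\gamma_\#$), the same contour shift and residue computation at the simple or double pole $\lambda$, and the same substitution back into (\ref{sy1}) for the asymptotics of $\varphi'$. Your explicit concavity argument for why the pole order is at most two is a small, welcome addition that the paper leaves implicit.
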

\begin{proof} Here, we follow the proof of Theorem 3 (Step I) in \cite{AGT}. Set 
$$
\mathcal{D}(t):= \sum_{k\in \Z}\beta(k)( g'(0)\varphi(t-k-ch)-g(\varphi(t-k-ch))), 
$$
Take $C,\delta, \theta$ as in  {\bf {(H)}}.    Observe that without restricting the generality, we can assume that $(1+\theta)\lambda < \gamma_\#$. 
Since equation (\ref{sy1})  is translation invariant, we can suppose that $\varphi(t) < \delta$ for $t \leq 0$. 
Applying the bilateral Laplace transform to (\ref{sy1}), we obtain that 
$$
\tilde \chi(z,c)\Phi(z) = \int_{\R}e^{-zt}\mathcal{D}(t)dt =:\mathbf{D}(z), \ z \in \Pi. 
$$
We claim that, in fact,  function $\mathbf{D}$ is analytic  in the region  $\Pi_\alpha=\{z:\Re z  \in (0, (1+\theta)\lambda)\}$.  Indeed, we have
$$
\mathbf{D}(x+iy) = \int_{\R}e^{-iyt}[e^{-xt}\mathcal{D}(t)]dt. 
$$
Given $x: = \Re z  \in (0, (1+\theta)\lambda)$, we choose  $x'$ sufficiently close from the left to $\lambda$ to satisfy
$
-x+(1+\theta)x' >0.
$
Then $\varphi(t) \leq  C_x e^{ x't}, \ t \in \R,$  for some positive $C_x$  and  
$$
|\mathcal{D}(t)|\leq 
 C\sum_{k\geq t-ch}\beta(k)|\varphi(t-k-ch)|^{1+\theta}  +  
\kappa(1+g'(0))\sum_{k < t-ch}\beta(k) \leq  
$$
$$
e^{(1+\theta)x't}C_1\sum_{k\geq t-ch}\beta(k)e^{-x'(1+\theta)(k+ch)} +  
\kappa(1+g'(0))\sum_{k < t-ch}\beta(k) e^{-x'(1+\theta)(k+ch -t)}
\leq 
$$
$$
e^{(1+\theta)x't}\left[ C_2+  \kappa(1+g'(0))\right]\sum_{k \in \Z}\beta(k) e^{-x'(1+\theta)k} \leq C_*e^{(1+\theta)x't}, \ t \in \R. 
$$

Since clearly $\mathcal{D}(t)$ is bounded on $\R$, we find  that 
$e^{-xt}\mathcal{D}(t)$ belongs to $L^k(\R)$, for each $k \in [1, \infty]$ and $x \in (0, (1+\theta)\lambda)$.  In consequence, $\mathbf{D}$ is analytic  in $\Pi_\alpha$. In addition, for each $x \in (0, (1+\theta)\lambda)$ the function 
$\mathbf{d}_x(y):=\mathbf{D}(x+i  y )$ is bounded and square integrable on $\R$.  Also, for each 
vertical line $L_x:= \{x+it, \ t \in \R\}$ where $\tilde \chi (x+it) \not=0$, we have that $\tilde \chi (x+it) \sim cit, \ |t| \to \infty$.  Thus 
$1/\tilde \chi (x+it)$ is  square integrable on $\R$ as well. Consequently, for each fixed $x \in (0, (1+\theta)\lambda)$ such that $L_x$ does not contain zeros of $\tilde\chi(z)$, function $\mathbf{D}(x+i y )/\tilde \chi (x+iy)$ is integrable on $\R$ . 

As we have mentioned, $\tilde\chi(z,c)$ is  analytic in the domain $\Pi_\alpha$,
while 
$
\Phi(z) = {\mathbf{D}(z)}/\tilde \chi (z,c)
$
is analytic in  $\Re z  \in (0, \lambda)$ and meromorphic in $\Pi_\alpha$. In virtue of  (\ref{1zl}), we can suppose that 
$\Phi(z)$ has a unique singular point $\lambda$ in $\Pi_\alpha$ which is either simple or double pole. 

Now, for some $x'' \in (0,\lambda)$,  using the inversion theorem for the Laplace transform, we obtain that 
$$
\varphi(t) = \frac{1}{2\pi i}\lim_{N \to +\infty} \int_{x''-iN}^{x''+iN}\frac{e^{zt}\mathbf{D}(z)}{\tilde\chi (z,c)}dz, \ t \in \R. 
$$
If $x \in (\lambda, (1+\theta)\lambda)$ then 
$$
\int_{x''-iN}^{x''+iN}\frac{e^{zt}\mathbf{D}(z)dz}{\tilde\chi (z,c)} = \left( 
 \int_{x-iN}^{x+iN}+\int_{x''-iN}^{x-iN}- \int_{x''+iN}^{x+iN}\right)\frac{e^{zt}\mathbf{D}(z)dz}{\tilde\chi (z,c)} - 2\pi i{\rm Res}_{z=\lambda}\frac{e^{zt}\mathbf{D}(z)}{\tilde\chi (z,c)}.
$$
Since, by \cite[Corollary 2]{AGT}, 
$$
\lim_{N \to +\infty}\max_{z \in [x''\pm iN, x\pm iN]}(|\mathbf{D}(z)|+|1/\tilde\chi (z,c)|)=0,
$$
we  conclude  that, for each fixed $t \in \R$ 
$$
\lim_{N \to +\infty}\int_{x''\pm iN}^{x\pm iN}\frac{e^{zt}\mathbf{D}(z)}{\tilde\chi (z,c)}dz=0.
$$
Observe also that   function $\tilde \chi(z,c)$ does not have zero other than $\lambda$  in a small strip centered at $\Re z = \lambda$. 
Therefore
$$
\varphi(t) =  - {\rm Res}_{z=\lambda}\frac{e^{zt}\mathbf{D}(z)}{\tilde\chi (z,c)} + \frac{e^{xt}}{2\pi}\int_{\R}\frac{e^{iyt}\mathbf{d}_x(y)}{\tilde\chi(x+iy,c)}dy.
$$
Since 
$$
{\rm Res}_{z=\lambda}\frac{e^{zt}\mathbf{D}(z)}{\tilde\chi (z,c)}  = \frac{e^{\lambda t}\mathbf{D}(\lambda)}{\tilde\chi' (\lambda,c)},  \quad {\rm if} \ \chi' (\lambda,c) \not=0 , 
$$
$$
{\rm Res}_{z=\lambda}\frac{e^{zt}\mathbf{D}(z)}{\tilde\chi (z,c)}  = \frac{2e^{\lambda t}}{\tilde\chi'' (\lambda,c)}  \left(t \mathbf{D}(\lambda) + \mathbf{D}'(\lambda)- \mathbf{D}(\lambda)\frac{\tilde\chi'''(\lambda,c)}{3\tilde\chi''(\lambda,c)}\right),  \quad {\rm if} \  \chi' (\lambda,c) =0,
$$
we get the desired representation.  It should be noted here that $\tilde\chi''(\lambda,c) <0$, that 
$$
\lim_{|t| \to +\infty}\int_{\R}\frac{e^{iyt}\mathbf{d}_x(y)}{\tilde\chi(x+iy,c)}dy =0, \quad {\rm Res}_{z=\lambda}\frac{e^{zt}\mathbf{D}(z)}{\tilde\chi (z,c)}\not=0.
$$
Indeed, if the latter  residue were equal to $0$, then $\Phi(z)$ would not have a pole at $\lambda$. 

Finally, it is easy to check that 
$
c\varphi'(t)= 
D[\varphi(t+1)+\varphi(t-1) -2\varphi (t)] - \varphi (t) $
$+ \sum_{k\in \Z}\beta(k)g'(0)\varphi(t-k-ch) + \mathcal{D}(t)=
c\lambda \varphi (t) (1+ o(1)), \ t \to -\infty.
$
\end{proof}

Next, we claim that the statement of  Lemma \ref{l6} is also valid for solutions of (\ref{sy1}).  Regardless the fact that we do not know either  wavefronts are monotone on whole real line or they are  not, the proof of Case II can be repeated almost literally.  The monotonicity of wavefronts on $(-\infty, \rho)$ will be sufficient for this purpose. For instance, let us prove the 
following 

\begin{lemma} Under the assumptions of Lemma  \ref{l6}, there are large $\tau>0$ and $T_1 > T$  such that 
$$
\psi(t+\tau) > \phi(t),\  t < T_1, \quad \phi(t),\psi(t+\tau) \in (\kappa-\sigma, \kappa), \ t \geq T_1-ch.  
$$
\end{lemma}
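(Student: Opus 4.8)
The plan is to reproduce the scheme of Case II from the proof of Lemma~\ref{l6}, replacing the global monotonicity of the profiles (which is not available for solutions of~(\ref{sy1})) by the monotonicity on a half-line $(-\infty,\rho)$ granted by Lemma~\ref{prL}, together with the elementary facts that $\phi,\psi$ take values in $(0,\kappa)$, that $\phi(-\infty)=\psi(-\infty)=0$, and that $\phi(+\infty)=\psi(+\infty)=\kappa$. First I would fix $\sigma>0$ with $\max_{[\kappa-\sigma,\kappa]}g'\le 1$, and then choose $T_1>T$ so large that $\phi(t)\in(\kappa-\sigma,\kappa)$ for all $t\ge T_1-ch$; this is possible since $\phi<\kappa$ everywhere while $\phi(+\infty)=\kappa$. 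It then remains only to produce a suitable large $\tau$.

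To choose $\tau$, I would introduce a few landmarks. Put $M:=\sup_{t\le T_1}\phi(t)$; since $\phi$ is continuous, bounded by $\kappa$, and tends to $0$ at $-\infty$, this supremum is attained and $M<\kappa$. As $\psi(+\infty)=\kappa>M$, pick $T_2>\rho$ with $\psi(s)>M$ for all $s\ge T_2$, and set $m:=\min_{s\in[\rho,T_2]}\psi(s)>0$. Since $\phi(-\infty)=0<m$, pick $T_3<0$ with $\phi(t)<m$ for $t<T_3$; and since $\psi(+\infty)=\kappa>\kappa-\sigma$, pick $T_4$ with $\psi(s)>\kappa-\sigma$ for $s\ge T_4$. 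Now take any $\tau>0$ so large that $\tau>\rho-T$, $\tau>T_2-T_3$, and $\tau\ge T_4-T_1+ch$.

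The verification would then be a three-case split according to where $s:=t+\tau$ falls, for a given $t<T_1$. If $s\le\rho$, then $t\le\rho-\tau<T$, so by monotonicity of $\psi$ on $(-\infty,\rho)$ and by the hypothesis $\psi(t+\tau)=\psi(s)\ge\psi(t)>\phi(t)$. If $\rho<s<T_2$, then $t<T_2-\tau<T_3$, hence $\phi(t)<m\le\psi(s)=\psi(t+\tau)$. If $s\ge T_2$, then $\psi(t+\tau)=\psi(s)>M\ge\phi(t)$. Thus $\psi(t+\tau)>\phi(t)$ for every $t<T_1$. Together with $\phi(t)\in(\kappa-\sigma,\kappa)$ for $t\ge T_1-ch$ (choice of $T_1$), with $\psi<\kappa$ throughout, and with $\psi(t+\tau)>\kappa-\sigma$ for $t\ge T_1-ch$ (because then $t+\tau\ge T_1-ch+\tau\ge T_4$), this yields all the assertions of the lemma.

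The only genuinely delicate point — and the reason for the somewhat baroque choice of landmarks — is precisely that $\psi$ need not be monotone on all of $\R$, its ``non-monotone window'' lying to the right of $\rho$. The split above is arranged so that this window, intersected with $[\rho,T_2]$, is a \emph{fixed} compact set on which $\psi\ge m>0$, while the corresponding set of $t$'s, namely $(\rho-\tau,T_2-\tau)$, gets pushed (by taking $\tau$ large) into the region $t<T_3$ where $\phi<m$; this is what prevents a circular dependence between $\tau$ and the region one must control. A conceivable alternative would be to compare the $-\infty$ asymptotics of $\phi$ and $\psi$ supplied by Lemma~\ref{prL} and argue via decay rates, but keeping the relevant thresholds uniform in $\tau$ is more awkward along that route, so I would favour the monotonicity-based argument above.
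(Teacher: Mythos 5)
Your proof is correct, and it reaches the stated conclusion by a route that differs in execution from the paper's. The paper works with the first crossing point: for each $\tau\ge 0$ it introduces $T(\tau)$ with $\psi(t+\tau)>\phi(t)$ for $t<T(\tau)$ and $\phi(T(\tau))=\psi(T(\tau)+\tau)$, then shows by contradiction (using the monotonicity of the profiles near $-\infty$ from Lemma \ref{prL} together with hypothesis (\ref{nado}) to exclude $T(\tau_j)\to-\infty$, and the strict bound $\phi<\kappa$ to exclude finite accumulation points as $\tau_j\to+\infty$) that $T(\tau)\to+\infty$, after which the inclusion of both profiles in $(\kappa-\sigma,\kappa)$ for $t\ge T_1-ch$ is left as ``straightforward.'' You instead fix $T_1$ first and produce an explicit admissible $\tau$ via the landmarks $M$, $m$, $T_2$, $T_3$, $T_4$, verifying $\psi(t+\tau)>\phi(t)$ by the three-way split $s\le\rho$, $\rho<s<T_2$, $s\ge T_2$; the only nontrivial ingredient you use is the same one the paper uses, namely monotonicity of $\psi$ on $(-\infty,\rho)$ plus $0<\phi,\psi<\kappa$ and the limits at $\pm\infty$. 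What your version buys is that it is fully constructive and leaves no ``remainder'' to the reader — in particular the handling of the non-monotone window of $\psi$ to the right of $\rho$, which the paper's sketch glosses over, is made completely explicit by pushing the preimage $(\rho-\tau,T_2-\tau)$ into the region where $\phi<m$. What the paper's version buys is brevity and the reuse of the crossing-time function $T(\tau)$, which is the same object manipulated in Case II of Lemma \ref{l6} and thus keeps the whole sliding argument in one idiom. Both arguments are sound for equation (\ref{sy1}).
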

\begin{proof} Due to the monotonicity of $\phi$ and $\psi$ at $-\infty$, we find that for every $\tau \geq 0$
there exists $T(\tau)$ such that 
$$
\psi(t+\tau) > \phi(t),\  t < T(\tau), \quad \phi(T(\tau))=\psi(T(\tau)+\tau).   
$$
Let us prove that $T(\tau)$ is bounded from below on $\R_+$. Indeed, otherwise there exists a converging sequence 
$\tau_j$ such that $T(\tau_j) \to - \infty$. In turn, this  forces $T(\tau_j) + \tau_j \to -\infty$.  But then we  may 
use the monotonicity properties of $\phi, \psi$ in order to get a contradiction:  
$$
\phi(T(\tau))=\psi(T(\tau)+\tau) > \psi(T(\tau)).  
$$
Since $\phi(s) < \kappa, \ s \in \R$, we  deduce in a similar way  that the sequence $\{T(\tau_j)\}$ can not have a finite limit as $\tau_j \to +\infty$.  Thus $T(\tau) \to +\infty$ as $\tau \to +\infty$.  Since $\phi(+\infty) = \psi(+\infty) = \kappa$, the remainder of the proof is straightforward. 
\end{proof}

\noindent Now, the following main changes should be  introduced in  the proof of  Lemma \ref{l6}:  
\vspace{2mm}

1. Set $\Delta (t) = \psi(t)-\phi(t)$. Instead of (\ref{pt}), we then have that $\Delta(T)= \Delta'(T)=0$,  
 \begin{eqnarray}
0= D[\Delta(T+1)+\Delta(T-1) -2\Delta (T)] - \Delta(T) +  \nonumber \\
\sum_{k\in \Z}\beta(k)\left(g(\psi(T-k-ch))- g(\phi(T-k-ch))\right) > 0. \nonumber
\end{eqnarray}
Here (non-strict) monotonicity of $g$ is sufficient because of 
$$\Delta(T+1)+\Delta(T-1) -2\Delta (T) \geq \Delta(T-1) >0, \ g(\psi(s)) \geq g(\phi(s)), \ s \in \R. $$

2. If $a_* >0$, we take small positive $\sigma >0$ and integer $N_1>0$ such that 
$$\kappa \sum_{|k| \geq N_1}\beta(k) \leq 0.5a_* (1- \max_{s\in [\kappa-\sigma,\kappa]}g'(s))$$ 
and then we assume additionally that $T$ is such that
$$
\phi(t), \psi(t) \in (\kappa-\sigma, \kappa), \ t \geq T-N_1-ch.
$$
3. Similarly, in (\ref{pt2}),  the expression $g(\psi(t_m-ch))- g(\phi(t_m-ch))$ should be replaced with 
$$
\sum_{k\in \Z}\beta(k)\left(g(\psi(t_m-k-ch))- g(\phi(t_m-k-ch))\right) \geq -0.5 a_*(1- \max_{s\in [\kappa-\sigma,\kappa]}g'(s))+ 
$$
$$ \sum_{|k| < N_1}\beta(k)g'(\theta_k)(\psi(t_m-k-ch) - \phi(t_m-k-ch)) \geq -0.5 a_*(1+ \max_{s\in [\kappa-\sigma,\kappa]}g'(s)).
$$
As a result, we get again a contradiction: 
\begin{eqnarray*}\label{pt22}
0= D[\Delta(t_m+1)+\Delta(t_m-1) -2\Delta(t_m)]  - \Delta(t_m)+ \nonumber \\
\sum_{k\in \Z}\beta(k)\left(g(\psi(t_m-k-ch))- g(\phi(t_m-k-ch))\right) > 0.5 a_*(1- \max_{s\in [\kappa-\sigma,\kappa]}g'(s)) \geq 0.
\end{eqnarray*}

To finalize the proof of Theorem \ref{main4},  it suffices to repeat the last two paragraphs of the third section. 

\section{Proof of Theorem \ref{main3}} In virtue of the front uniqueness, the first statement of Theorem \ref{main3} was already proved in the previous section (cf. (\ref{ff})) so we have to consider the case $c= c_*$ only. 
Suppose, contrary to our claim,  that
$$\phi(t+s_0)= e^{\lambda_2 t} + O(e^{(\lambda_2+ \delta) t}), \ t \to -\infty, \ \lambda_j:= \lambda_j(c_*), $$
(without restricting the generality, we can assume that $s_0 =0$), take some $c' < c_*$ close to $c_*$ and consider the following piecewise continuous function
$$
\phi_+(t):= \left\{%
\begin{array}{lll}
Me^{\rho t }+ a e^{\lambda_2't },
    & \hbox{when}\  t \leq T_1,
   \\ \phi(t)+\epsilon,
    & \hbox{when} \ t \in(T_1, T_2],   \\
      \kappa, &  \hbox{when}\
      T> T_2,
\end{array}%
\right.\nonumber $$
where $\lambda_2': = \lambda_2(c') > \lambda_2, \ \rho =\lambda_2(1+\theta) > \lambda_2', \ M, a, \epsilon  >0,\  a\ll \epsilon \ll 1, \ M \gg 1,$ and 
$$
Me^{\rho T_1}+ a e^{\lambda_2'T_1} = \phi(T_1), \quad  \phi(T_2)+ \epsilon = \kappa. 
$$
For sufficiently large $M$ and small  $\epsilon >0$, the above definitions yield  large negative $T_1= T_1(M,a)$ and large positive $T_2(\epsilon)$.  Therefore, if $M$ is sufficiently large and $a, c_*-c'>0$ are sufficiently small, then we can suppose that,  for all $t \leq T_1$,
$$
E_+:= E_+(t,\epsilon,a,M,c'):= \phi_+''(t) - c'\phi'_+(t) - \phi_+(t) + g(\phi_+(t-c'h)) =  
$$
$$
Me^{\rho t} \left (\chi(\rho,c') + \left[\frac{g(\phi_+(t-c'h))}{\phi_+(t-c'h)} - g'(0)\right]e^{-\rho c'h}\right) +
$$
$$
ae^{\lambda_2' t} \left[\frac{g(\phi_+(t-c'h))}{\phi_+(t-c'h)} - g'(0)\right]e^{-\lambda_2' c'h} <
0.5 Me^{\rho t} \chi(\rho,c') +
C_1ae^{\lambda_2' t} \left[Me^{\rho t }+ a e^{\lambda_2't }\right]^\theta
$$
$$
 \leq 0.5 Me^{\rho t} \chi(\rho,c') +
C_1ae^{\lambda_2' (1+\theta)t} \left[Me^{(\rho-\lambda_2') t }+ a\right]^\theta \leq 
$$
$$
0.5 Me^{\rho t} \chi(\rho,c') +
C_1aMe^{\lambda_2' (1+\theta)t}  <Me^{\rho t} (0.5 \chi(\rho,c') +
C_1a) <0. 
$$

Moreover,  since $\rho > \lambda_2$,  we also can choose $T_1\gg a$ in such a way that 
$$
\phi'_+(T_1-) \approx \rho \phi_+(T_1) >  \phi'_+(T_1+) \approx \lambda_2   \phi_+(T_1) , \ Me^{\rho s} +a e^{\lambda_2's}\ < \phi(s), \ s \in [T_1-h,T_1). 
$$
Indeed, we can first determine (large negative) $\bar T_1$ as the leftmost root of equation $\phi(t) = Me^{\rho t}$ (with $M$ 
large and positive).  This corresponds to  the limit case $a=0$.  The inequality $\phi'_+(\bar T_1-) >  \phi'_+(\bar T_1+)$ is obvious in such a case. To prove the second inequality, suppose that for a moment that, for some $S \in [\bar T_1-h,\bar T_1)$, 
$$Me^{\rho S} = \phi(S), \ \quad Me^{\rho t} < \phi(t), \ t \in (S,\bar T_1).$$ 
Then $\rho Me^{\rho S} \leq  \phi'(S)$ so that (assuming that  $M$ is large) 
$$
\rho \leq \phi'(S)/\phi(S) \approx \lambda_2, 
$$
a contradiction. Since $a \ll 1$ can considered as a small perturbation parameter, we deduce that the mentioned  
properties hold for all small $a$ (where $T_1$ is  close to $\bar T_1$).

Let $\sigma >0$ be such that $\gamma:= \max \{g'(s): s \in [\kappa - \sigma, \kappa]\} <1$.  From now on, we  fix $a, M, T_1$ chosen above and  take $\epsilon >0, \ c_*-c' >0$ small enough to satisfy  
$$
\phi'_+(T_1+)- \phi'_+(T_1-) < \epsilon \frac{c_* - \sqrt{c_*^2+4}}{2} <0,\  \  -\epsilon(1-\gamma) +(1+ \gamma h) \max_{s \in \R} \phi'(s) (c_*-c') <0,
$$
If $t \in[T_1, T_1+c'h]$, then 
$$
E_+(t,\epsilon,c') =
  (c_*-c')\phi'(t) -  \epsilon + g(\phi_+(t-c'h)) - g(\phi(t-c_*h)). 
$$
Next, for $t \in  [T_1+c'h,T_2],$ we have
$$
E_+(t,\epsilon,c')= \phi''(t) - c'\phi'(t) - \phi(t) - \epsilon + g(\phi(t-c'h)+\epsilon) =  
$$
$$
  (c_*-c')\phi'(t) -  \epsilon + g(\phi(t-c'h)+\epsilon) - g(\phi(t-c_*h)). 
$$
Let us  define $T_1^+$ from 
$$
\phi(T_1^+-2c^*h)= \kappa - \sigma. 
$$
Observe that $T_1^+$ does not depend on $\epsilon, c'$ (thus we may assume that $T_1^+ < T_2$) and that, for some $\theta_1 \in [\kappa - \sigma, \kappa] $ and $\theta_2  >  T_1^+-2c^*h$, 
$$
-\epsilon + g(\phi(t-c'h)+\epsilon) - g(\phi(t-c_*h)) = g'(\theta_1) (\epsilon +\phi'(\theta_2)(c_*-c')h)  -\epsilon \leq 
$$
$$
-\epsilon(1-\gamma) + \gamma h \max_{s \in \R} \phi'(s) (c_*-c'), \quad t \in [T_1^+,T_2].  
$$
As a consequence, we obtain that $E_+(t,\epsilon,c')< 0$ for all $t \in [T_1^+, T_2]$.
On the other hand, if $t \geq T_2+c'h$ then  $E_+(t,\epsilon,c') =0$, and if $t \in [T_2,T_2+c'h)$, it holds 
$$
E_+(t,\epsilon,c') = -\kappa +  g(\phi(t-c'h)+\epsilon) <0.$$
Hence, if $c'$ is close to $c_*$ we find that 
\begin{equation}\label{E+}
E_+(t,\epsilon,c') \leq 0, \ t \in \R\setminus [T_1,T_1^+],  \
\sup_{t \in [T_1,T_1^+]} E_+(t,\epsilon,c')=\omega(c', \epsilon), \lim\limits_{(c' , \epsilon) \to (c_*,0)}\omega(c', \epsilon)\leq 0. 
\end{equation}
Next, Lemma \ref{imp} assures that 
\begin{eqnarray}\label{pfo}&& \hspace{-5mm}\nonumber \phi_+(t)=\frac{1}{\xi'_2
- \xi'_1} \left(\int^t_{-\infty} e^{\xi'_1 (t-s)}g(\phi_+(s-c'h))ds +
\int_t^{+\infty}e^{\xi'_2 (t-s)}g(\phi_+(s-c'h))ds\right. \\ &&  \nonumber
\left.-\int^t_{-\infty} e^{\xi'_1 (t-s)}E_+(s)ds 
-\int_t^{+\infty}e^{\xi'_2 (t-s)}E_+(s)ds
\right) \\ &&  \nonumber
 +\frac{1}{\xi'_2 - \xi'_1}\left[\sum_{t<T_j}e^{\xi'_2(t-T_j)}(\xi'_1\alpha_j
 -\beta_j)+\sum_{t>T_j}e^{\xi'_1(t-T_j)}(\xi'_2\alpha_j-\beta_j)\right], \end{eqnarray}
where  $\beta_2 <0, \ \alpha_2 =0, \ \alpha_1 =\epsilon$, and  $\beta_1 <0$ does not depend on $\epsilon$.  Consider 
\begin{eqnarray*} && {\mathcal E}(t): = -\int^t_{-\infty} e^{\xi'_1 (t-s)}E_+(s)ds 
-\int_t^{+\infty}e^{\xi'_2 (t-s)}E_+(s)ds+ \\ && \sum_{t<T_j}e^{\xi_2(t-T_j)}(\xi'_1\alpha_j
 -\beta_j)+\sum_{t>T_j}e^{\xi'_1(t-T_j)}(\xi'_2\alpha_j-\beta_j).
\end{eqnarray*} Since there exists $\nu >0$ (independent on small $\epsilon, c_*-c'$)  such that $\xi'_1\alpha_1
 -\beta_1 > \nu,$ $ \xi'_2\alpha_1-\beta_1 > \nu, $
we infer from (\ref{E+})  that, for $t \leq T_1$ and small positive $\epsilon, c_* -c'$, 
$$
{\mathcal E}(t) > -\int_{T_1}^{T_1^+}e^{\xi'_2 (t-s)}E_+(s)ds + e^{\xi'_2(t-T_1)}\nu \geq e^{\xi'_2(t-T_1)}(\nu - \frac{\omega(c', \epsilon)}{\xi_2'}) >0. 
$$
Next, for $t \in [T_1, T_1^+]$  and small positive $\epsilon, c_* -c'$, we have that 
$$
{\mathcal E}(t) > -\int^t_{T_1} e^{\xi'_1 (t-s)}E_+(s)ds 
-\int_t^{T_1^+}e^{\xi'_2 (t-s)}E_+(s)ds + e^{\xi'_1(t-T_1)}\nu \geq 
$$
$$
e^{\xi'_1(T_1^+-T_1)}\nu- \omega(c', \epsilon) \sqrt{(c')^2+4}>0. 
$$
Similarly, if $t \geq T_1^+$ then 
$$
{\mathcal E}(t) > -\int_{T_1}^{T_1^+}e^{\xi'_1 (t-s)}E_+(s)ds + e^{\xi'_1(t-T_1)}\nu = e^{\xi'_1(t-T_1^+)}( e^{\xi'_1(T_1^+-T_1)}\nu - \frac{\omega(c', \epsilon)}{|\xi'_1|}) >0. 
$$
Therefore, for all $t \in \R$ and small $c_*-c', \epsilon>0$,  
$$
\phi_+(t)> \frac{1}{\xi'_2
- \xi'_1} \left(\int^t_{-\infty} e^{\xi'_1 (t-s)}g(\phi_+(s-c'h))ds +
\int_t^{+\infty}e^{\xi'_2 (t-s)}g(\phi_+(s-c'h))ds\right). 
$$
To finalize the proof of Theorem \ref{main3}, it suffices now  to repeat Steps III and IV of Section 2. 
The construction of an lower solution is possible because of $c_* > c_\#$:  this inequality assures the existence of two positive real roots $\lambda_2(c') < \lambda_1(c')$ for all $c'$ close to $c_*$. 

\section*{Acknowledgments}  This research was supported by FONDECYT (Chile), pro\-jects  	1080034 (E.T. and M.P.),  
1110309 (S.T.) S. Trofimchuk was also partially supported by CONICYT 
(Chile) through PBCT program ACT-56.

\medskip

\medskip


\begin{thebibliography}{30}
\bibitem{AGT}    
\newblock  { M.~Aguerrea, C.~ Gomez and S.~Trofimchuk}, 
   \newblock \emph{ On uniqueness of semi-wavefronts (Diekmann-Kaper theory of a nonlinear convolution equation re-visited),} 
      \newblock   Math. Ann., DOI 10.1007/s00208-011-0722-8.  
%
%
\bibitem{BD}  
\newblock   R. D. Benguria and M. C. Depassier, 
\newblock   \emph {Variational characterization of the speed of propagation
of fronts for the nonlinear diffusion equation,}
\newblock    Comm. Math. Phys., \textbf{175} (1996), 221--227.
 
\bibitem{BN}  
\newblock H. Berestycki and L. Nirenberg, 
\newblock \emph{Traveling waves in cylinders}, 
\newblock Ann. Inst. H. Poincare Anal. Non. Lineaire, \textbf{9} (1992),  497-572. 



\bibitem{BNPR} 
\newblock  H. Berestycki, G. Nadin, B. Perthame and L. Ryzhik,  
\newblock \emph{ The non-local Fisher-KPP equation: travelling 
waves and steady states},  
\newblock Nonlinearity, \textbf{22} (2009),  2813-2844 .

\bibitem{bn} 
\newblock { A. Boumenir and  V.M. Nguyen,} 
\newblock \emph{Perron theorem in the monotone
iteration method for traveling waves in delayed reaction-diffusion
equations,} 
\newblock {J. Differential Equations}  \textbf{244} (2008), 1551-1570.

\bibitem{CMP}
\newblock  { A.~Calamai,  C.~Marcelli  and  F.~Papalini,} 
\newblock  \emph{A general approach for front-propagation in functional reaction-diffusion equations}, 
\newblock  J. Dyn. Diff. Eqns., \textbf{21} (2009)567--392.


\bibitem{CC} 
\newblock  {J. Carr and A. Chmaj,} 
\newblock  \emph{Uniqueness of travelling waves for nonlocal monostable equations}, 
\newblock  Proc. Amer. Math. Soc., {\bf 132} (2004),   2433--2439.  

\bibitem{chen} 
\newblock  X. Chen and J.-S. Guo,
\newblock  \emph{ Uniqueness and existence of traveling waves for discrete
quasilinear monostable dynamics}, 
\newblock  Math. Ann. , \textbf{326} (2003),  123--146.

\bibitem{chfg}  
\newblock  X. Chen, S.-C. Fu and J.-S. Guo, 
\newblock  \emph{Uniqueness and asymptotics of traveling waves of monostable dynamics on lattices,} 
\newblock  SIAM J. Math. Anal., \textbf{ 38} (2006), 233--258. 

\bibitem{co} 
\newblock J. Coville, 
\newblock \emph{
On uniqueness and monotonicity of solutions of nonlocal reaction
diffusion equation}, 
\newblock   Ann. Mat. Pura Appl. \textbf{185} (2006),  461--485.  


\bibitem{cdm} 
\newblock  J. Coville, J. D\'avila  and S. Mart\'inez, 
\newblock  \emph{Nonlocal anisotropic dispersal with monostable nonlinearity}, 
\newblock   J. Differential Equations \textbf{244} (2008),  3080--3118. 

\bibitem{DK} 
\newblock O. Diekmann and H. G. Kaper, 
\newblock \emph{On the bounded solutions of a nonlinear
convolution equation,} 
\newblock Nonlinear Anal., \textbf{2} (1978), 721-737.


\bibitem{ES} \newblock  {U.~Ebert and  W.~van~Saarloos}, 
\newblock  \emph{Front propagation into unstable states: universal 
algebraic  convergence towards uniformly translating pulled fronts},  
\newblock  Phys. D \textbf{146} (2000),  1--99. 
%

\bibitem{fwz}
\newblock { J. Fang,  J. Wei and  
X.-Q. Zhao, } 
\newblock \emph {
 Uniqueness of traveling waves for nonlocal lattice equations,}
 \newblock  {Proc. Amer. Math. Soc. }
{\bf 139}  (2011), 1361--1373.


\bibitem{FZ} 
\newblock {
J. Fang and X.-Q. Zhao,}
\newblock \emph{Existence and uniqueness of traveling waves for non-monotone integral equations with  applications, }
\newblock J. Differential Equations,  \textbf{248} (2010), 2199--2226.

\bibitem{FT} 
\newblock {  T. Faria and  S. Trofimchuk,} 
\newblock  \emph {Non-monotone traveling waves in a single species
reaction\,-diffusion equation with delay,} 
\newblock  {J. Differential Equations} \textbf{228} (2006), 357--376.

%
\bibitem{GK}  \newblock  B. Gilding and R. Kersner, 
\newblock  ``Travelling Waves in Nonlinear Diffusion-Convection Reaction",
\newblock  Birkhauser, 2004.


\bibitem{GT}  \newblock  A.  Gomez and S. Trofimchuk, 
\newblock  \emph{Monotone traveling wavefronts of the KPP-Fisher delayed equation,}
\newblock  {J. Differential Equations} \textbf{250} (2011) 1767-1787.



%
%


\bibitem{HR}    
\newblock  K. P. Hadeler and F. Rothe, 
\newblock \emph{Travelling fronts in nonlinear diffusion equations},
\newblock  J. Math. Biol., \textbf{2} (1975), 251--263. 

\bibitem{kpp}    
\newblock  A. Kolmogorov, I. Petrovskii and N. Piskunov, 
\newblock \emph{Study of a diffusion equation that is
related to the growth of a quality of matter, and its application to a biological
problem}, 
\newblock  Byul. Mosk. Gos. Univ. Ser. A Mat. Mekh., \textbf{ 1}, (1937)  1--26. 

\bibitem{ma} 
\newblock  { S. Ma,} 
\newblock  \emph{Traveling wavefronts for delayed reaction-diffusion systems via a
fixed point theorem}, 
\newblock  { J. Differential Equations}, \textbf{171} (2001),
294--314.

\bibitem{ma1} 
\newblock  { S. Ma,} 
\newblock  \emph{Traveling waves for non-local delayed diffusion equations
via auxiliary equations}, 
\newblock  {J. Differential Equations}, \textbf{237} (2007),
259--277.

\bibitem{mazou} \newblock{ S. Ma, X. Zou,} 
\newblock \emph{ Existence, uniqueness and stability of travelling
waves in a discrete reaction-diffusion monostable
equation with delay,} 
\newblock {J. Differential Equations}, \textbf{217} (2005),
54--87.

\bibitem{FA} 
\newblock  { J. Mallet-Paret,} 
\newblock  \emph{The Fredholm alternative for functional
differential equations of mixed type},
\newblock   {J. Dynam. Differential
Equations}  \textbf{ 11} (1999), 1--48.

\bibitem{MeiI} 
\newblock M. Mei, Ch. Ou and X.-Q. Zhao, 
\newblock \emph{Global stability of monostable traveling waves for 
nonlocal time-delayed reaction-diffusion equations}, 
\newblock  SIAM J. Math. Anal.,  \textbf{42} (2010), 233--258. 

%
%

\bibitem{NPT}
\newblock G. Nadin, B. Perthame and M. Tang,  
\newblock \emph{Can a traveling wave connect two unstable states? The case of the 
nonlocal Fisher equation}, 
\newblock C. R. Acad. Sci. Paris, Ser. I, \textbf{349}  (2011), 553-557. 


\bibitem{roth} \newblock  { F.~Rothe,}  
\newblock  \emph { Convergence to pushed fronts, } 
\newblock  Rocky Mountain J. Math. , \textbf{11} (1981),  617--633.


\bibitem{sch} \newblock  {K. Schaaf,}  
\newblock  \emph{Asymptotic behavior
and traveling wave solutions for parabolic functional differential
equations}, 
\newblock   {Trans. Amer. Math. Soc.}, \textbf{302} (1987),   587--615.

\bibitem{ST} 
\newblock A.N. Stokes, 
\newblock \emph{On two types of moving front in quasilinear diffusion,} 
\newblock Math. Biosciences, \textbf{31} (1976),  307--315. 

\bibitem{KS}
\newblock  K. Schumacher, 
\newblock \emph{ Travelling-front solutions for integro-differential equations. I }, 
\newblock  J. Reine Angew. Math.,  {\bf 316} (1980),  54--70.  


%
%
\bibitem{TPT}  
\newblock { E. Trofimchuk, M. Pinto and  S. Trofimchuk}, 
\newblock \emph{Traveling wavefronts for a model of the Belousov-Zhabotinskii reaction,} 
\newblock preprint 	\arXiv{1103.0176v1}.

\bibitem{TTT}  
\newblock { E. Trofimchuk, V. Tkachenko and  S. Trofimchuk}, 
\newblock \emph{Slowly
oscillating wave solutions of a single species reaction-diffusion
equation with delay,} 
\newblock  {J. Differential Equations} \textbf{245} (2008), 
2307--2332.

\bibitem{TAT}  \newblock { E. Trofimchuk, P. Alvarado and  S. Trofimchuk},
\newblock \emph{
On the geometry of wave solutions of a delayed reaction-diffusion
equation}, 
\newblock {J. Differential Equations}  \textbf{246}  (2009),  1422--1444.

\bibitem{TT} 
\newblock E. Trofimchuk and S. Trofimchuk, 
\newblock \emph{Admissible wavefront speeds for a single species reaction-diffusion equation with delay}, 
Discrete Continuous Dynam. Systems - A, \textbf{ 20} (2008), 407--423.

\bibitem{wlr2}
\newblock {Z.-C. Wang, W.T. Li and S. Ruan,} 
\newblock \emph{Traveling fronts in monostable equations with nonlocal 
delayed effects,} 
\newblock J. Dynam. Differential Equations, \textbf{20} (2008), 573--607.
%
%

\bibitem{wen} 
\newblock P. Weng, H. Huang and  J. Wu, 
\newblock \emph{Asymptotic speed of propagation of wave fronts in a lattice delay differential equation with global interaction,} 
\newblock IMA J. Appl. Math ., \textbf{ 68} (2003) 409--439.
 
\bibitem{wz} 
\newblock {J. Wu and X. Zou}, 
\newblock \emph{Traveling wave fronts of
reaction-diffusion systems with delay,} 
\newblock { J. Dynam. Differential
Equations},  \textbf{13} (2001), 651--687.

%
%


\bibitem{Xin} 
\newblock {J. ~Xin}, 
\newblock \emph{Front propagation in heterogeneous media,} 
\newblock SIAM Review, \textbf{42} (2000), 161--230. 


\bibitem{zhh} 
\newblock  B. Zinner, G. Harris and W. Hudson, 
\newblock \emph{Travelling wavefronts for the discrete Fisher's equation} 
\newblock J. Differential Equations, \textbf{105} (1993),  46--62.

\end{thebibliography}
\end{document}